\numberwithin{figure}{section}
\numberwithin{table}{section}
\newtheorem{theorem}{Theorem}[section]
\newtheorem{lemma}[theorem]{Lemma}
\newtheorem{prop}[theorem]{Proposition}
\theoremstyle{definition}
\newtheorem{example}[theorem]{Example}
\newtheorem{cor}[theorem]{Corollary}
\theoremstyle{remark}
\newtheorem{remark}[theorem]{Remark}
\numberwithin{equation}{section}
\newfont{\tap}{tap scaled 650}
\def \H{{\mathbb H}}
\def \R{{\mathbb R}}
\def \Z{{\mathbb Z}}
\def \Q{{\mathbb Q}}
\def \[{[ }
\def \]{] }
\newcommand{\im}{{\mathrm{Im}\,}}
\definecolor{dgreen}{rgb}{0,0.5,0}        
\definecolor{dred}{rgb}{0.5,0,0}
\begin{document}

\title{$SL_2$-tilings with translational symmetry}
\author{V\'eronique Bazier-Matte}
\author{Marie-Anne Bourgie}
\address{D\'epartement de math\'ematiques et de statistique, Universit\'e Laval, Qu\'ebec (Qu\'ebec), G1V 0A6, Canada}
\email{veronique.bazier-matte.1@ulaval.ca, marie-anne.bourgie.1@ulaval.ca}
  \author{Anna Felikson}
\author{Pavel Tumarkin}
\address{Department of Mathematical Sciences, Durham University, Mathematical Sciences \& Computer Science Building, Upper Mountjoy Campus, Stockton Road, Durham, DH1 3LE, UK}
\email{anna.felikson@durham.ac.uk, pavel.tumarkin@durham.ac.uk}

\begin{abstract}
An $SL_2$-tiling is a bi-infinite matrix  in which all adjacent
$2 \times 2$ minors are equal to~$1$. Positive integral $SL_2$-tilings were introduced by Assem, Reutenauer and Smith as  generalisations of classical  Conway--Coxeter frieze patterns. We show that positive integral $SL_2$-tilings with translational symmetry are in bijection with triangulations of annuli. We use this correspondence  to study the properties of periodic positive integral $SL_2$-tilings.

\end{abstract}

 \maketitle

\thispagestyle{empty}
 
 \section{Introduction}

 Frieze patterns were introduced by Coxeter~\cite{C} and studied by Conway and Coxeter~\cite{CC} in 1970s. These are 
 grids of numbers arranged as in Fig.~\ref{fig-fr}, with finitely many rows, a row of $0$'s and $1$'s both at the top and the bottom, and such that  the entries in every small diamond  $a$\put(3,5){$b$}\put(3,-6){$c$} \quad  $d$  satisfy the  {\it diamond  rule}: $ad - bc = 1$.

\begin{figure}[!h]
\epsfig{file=./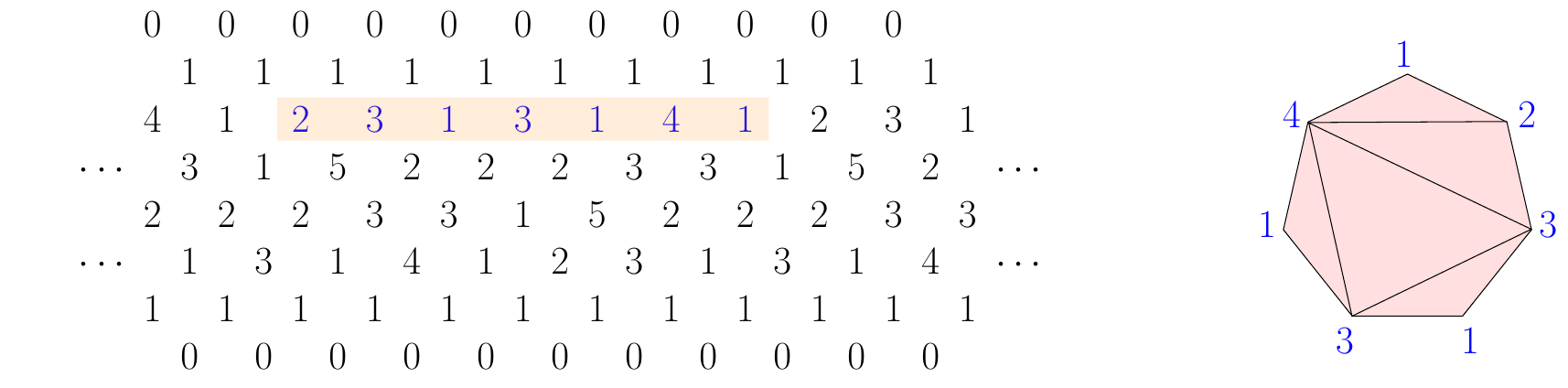,width=0.95\linewidth}  
\caption{(a) Conway--Coxeter's frieze pattern; (b) the corresponding triangulated polygon. }
\label{fig-fr}
\end{figure}

Conway and Coxeter~\cite{CC} showed that positive integral frieze patterns are in bijection with triangulated polygons, and based on that  derived a number of properties of positive integral frieze patterns.
Most importantly, every Conway--Coxeter's frieze pattern is periodic (i.e., invariant under a finite shift along its row), and is completely determined by its first non-trivial row, whose period  (highlighted in Fig.~\ref{fig-fr}) is called a {\it quiddity sequence} and consists of the numbers of triangles incident to the vertices in the triangulated polygon.

It turned out later that Conway--Coxeter's frieze patterns are closely related to cluster algebras, combinatorics, geometry and representation theory. Numerous generalizations of them were considered and were shown to exhibit various subsets of  properties of classical frieze patterns. 
One of the most investigated generalizations is called {$SL_2$-tilings} and was introduced by Assem, Reutenauer and Smith in~\cite{ARS}:

An {\it $SL_2$-tiling} is a bi-infinite matrix $(u_{i,j})$, $i, j\in \Z$, whose all adjacent $2\times 2$ minors are equal to $1$. An $SL_2$-tiling is \emph{integral} if  all $u_{i,j}\in\Z$, and \emph{positive} if all $u_{i,j}>0$.

\medskip
\noindent
In this paper, we consider {\it positive integral $SL_2$-tilings with translational symmetries}, i.e. $SL_2$-tilings for which
$$u_{i,j}=u_{i+m,j+n}$$ 
for some $m,n\in \Z$ and all $i,j\in \Z$ (we call such $SL_2$-tilings \emph{$(m,n)$-periodic}). See Fig.~\ref{u-sl}(a) for an example of a $(3,2)$-periodic $SL_2$-tiling. Notice that we use the indexing which is slightly different from the usual one. 

 The main result of this paper is  the following counterpart of Conway--Coxeter's theorem:

\begin{theorem}[Theorem~\ref{thm-main}, Corollary~\ref{cor-bijection}]
There is a one-to-one correspondence between positive integral  $SL_2$-tilings with translational symmetry and triangulations of annuli.  

\end{theorem}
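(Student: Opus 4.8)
The plan is to set up the bijection in both directions and then verify the two compositions are identities. First I would recall the classical construction for polygons: given a triangulated polygon, one associates to each pair of boundary vertices the number of triangles crossed by the diagonal joining them (equivalently, a suitably normalized path-counting or matching number), and checks this satisfies the diamond/unimodular rule. For an annulus with a triangulation, the analogous recipe is to lift everything to the universal cover: a triangulated annulus unrolls to a periodically triangulated infinite strip (a triangulated ``bi-infinite polygon''), and the marked points on the two boundary components give two bi-infinite sequences of indices. The entry $u_{i,j}$ should be defined using the $i$-th marked point on one boundary and the $j$-th marked point on the other, via the same triangle-counting / continuant construction used in the polygon case; the translational symmetry of the triangulation on the strip (coming from the deck transformation of the annulus cover) forces $u_{i,j}=u_{i+m,j+n}$ where $(m,n)$ records how many marked points lie on each boundary component. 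One must check that all adjacent $2\times 2$ minors equal $1$ (this is the standard Ptolemy/three-term relation for the arcs in the triangulation, now valid on the strip), that all entries are positive integers (each is a count of something, hence a positive integer — one needs the arcs to be well-defined finite-length arcs, which uses that we are on an annulus rather than a disk), and that the result is genuinely $(m,n)$-periodic. This gives the map from triangulations of annuli to periodic positive integral $SL_2$-tilings.

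For the reverse direction I would start from a positive integral $(m,n)$-periodic $SL_2$-tiling and reconstruct the triangulated annulus. The key input is a structure theorem for positive integral $SL_2$-tilings — presumably established earlier in the paper (or cited from \cite{ARS}) — saying that any such tiling arises from a pair of admissible sequences / a ``frieze-like'' combinatorial datum; concretely, each positive integral $SL_2$-tiling is built from two bi-infinite sequences of positive integers (the analogues of quiddity sequences, coming from the two boundary components), and conversely such a pair determines the tiling. From the periodicity one extracts two \emph{finite} quiddity-type sequences, one of length $m$ and one of length $n$; these should assemble into the quiddity datum of a triangulated annulus exactly as finite quiddity sequences assemble triangulated polygons in Conway--Coxeter. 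I would then invoke (or prove) that every such pair of finite positive-integer sequences satisfying the appropriate admissibility condition is realized by a unique triangulation of the annulus with $m$ and $n$ marked points on the two boundary components — the annular analogue of the Conway--Coxeter correspondence between quiddity sequences and triangulated polygons.

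The final step is to check the two constructions are mutually inverse: starting from a triangulation, reading off its annular quiddity datum, and rebuilding the tiling must return the original tiling (immediate once both maps are described on the level of quiddity data and one knows the tiling is determined by that datum); and starting from a tiling, building the triangulation, and reading back its quiddity datum must recover the original sequences (this is the content of the realization/uniqueness statement for annular triangulations). I would also need to track the discrete parameters carefully — the pair $(m,n)$, the "winding" of the triangulation around the annulus, and any choice of origin for the indexing — to be sure the correspondence is well-defined rather than well-defined up to shift, or to state precisely the equivalence relation under which it is a genuine bijection (the excerpt's indexing remark suggests the authors have chosen a normalization that kills the shift ambiguity).

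The main obstacle I expect is the reverse direction, specifically proving that an arbitrary positive integral periodic $SL_2$-tiling really does come from a \emph{pair of finite admissible sequences} realizable by an annulus triangulation: one must rule out exotic positive integral tilings not of this combinatorial form, and one must prove the annular analogue of Conway--Coxeter's theorem that such admissible data are exactly triangulated annuli. Unlike the polygon case, the annulus has nontrivial topology (two boundary components, arcs that may wrap around), so the induction used by Conway--Coxeter — peeling off an "ear" triangle at a vertex with quiddity $1$ — has to be adapted: one may need to first reduce to a "fan" or spiralling triangulation, handle the case where no boundary vertex has quiddity $1$ (which can genuinely happen on an annulus), and control arcs connecting the two boundary components separately from arcs internal to a single component. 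Managing this topological bookkeeping, rather than the algebra of the diamond rule, is where the real work lies.
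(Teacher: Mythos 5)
Your forward direction (read the entries off bridging arcs in the universal cover of a triangulated annulus, with the Ptolemy/three-term relation giving the unimodular rule and the deck transformation giving $(m,n)$-periodicity) is essentially the paper's construction (Examples~\ref{ex-n} and~\ref{ex-mn}). The problems are in the reverse direction and in the scope of the statement.

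First, your reconstruction of the triangulated annulus from the tiling hinges on an unproved ``structure theorem'' reducing the tiling to a pair of finite quiddity-type sequences, together with a claimed realization/uniqueness statement that such a pair corresponds to a unique triangulation of $A_{m,n}$. That uniqueness is false as stated: two triangulations of the annulus differing by a Dehn twist about the core curve have identical quiddity sequences on both boundary components (a peripheral arc has zero geometric intersection with the core, so the twist fixes every peripheral arc, hence fixes the triangle counts at all marked points), yet they produce different tilings, related by an index shift. So the quiddity datum does not determine the tiling, and the step you describe as ``immediate once both maps are described on the level of quiddity data'' cannot work; the annular Conway--Coxeter-type realization you flag as ``the real work'' is precisely what is missing. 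The paper takes a different route that avoids quiddity sequences altogether: from an $(m,n)$-periodic tiling it takes a staircase of entries $u_{i,j},u_{i+1,j},\dots,u_{i+m,j},\dots,u_{i+m,j+n-1}$, assigns them to the two-fan triangulation of $A_{m,n}$ with no peripheral arcs, propagates to all arcs by Ptolemy relations (the cluster structure guarantees consistency, Remark~\ref{rem-cl}), proves integrality on peripheral arcs by a one-line Ptolemy computation (Lemma~\ref{periph}), gets positivity from Penner's theorem, and then invokes Gunawan--Schiffler~\cite{GS} (every positive integral frieze on an annulus is unitary) to pass from the frieze to a unique triangulation. The Dehn-twist ambiguity is then invisible because the bijection is with friezes/triangulations with labelled marked points, matching the indexed tiling rather than a shift class.

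Second, the statement concerns tilings with \emph{any} translational symmetry, so one must rule out symmetries $u_{i,j}=u_{i+m,j+n}$ with $mn\le 0$ (in particular purely horizontal or vertical periods), for which there is no annulus $A_{m,n}$ at all. Your proposal silently assumes $m,n>0$. The paper spends Section~\ref{sec-sl2-transl} on exactly this point: using Short's classification~\cite{Sh} of tame tilings by pairs of paths on the Farey graph and Lemma~\ref{N} (a determinant is bounded below by the number of Farey edges the corresponding arc crosses), it shows in Propositions~\ref{mn<0} and~\ref{m0} that positive integral tilings with $mn\le 0$ do not exist, and also (Corollary~\ref{no 2}) that two independent symmetries are impossible. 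Without an argument of this kind your correspondence covers only the already-periodic-in-the-right-direction case and does not prove the theorem as stated.
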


More precisely, $(m,n)$-periodic $SL_2$-tilings correspond to triangulations of annuli with $m$ and $n$ marked points on its boundary components.

This can be restated in terms of friezes on surfaces: $SL_2$-tilings with translational symmetry are in bijection with friezes on annuli. More precisely, given a frieze $F$ on an annulus, the corresponding $SL_2$-tiling is filled with values of $F$ on all bridging arcs of the annulus  (see Section~\ref{fr-on-surf} for the definition and Section~\ref{sec mn} for details). 

The proof is partially based on the direct construction (see Section~\ref{sec mn}), and partially on the Short's characterisation~\cite{Sh} of tame $SL_2$-tilings in terms of a pair of paths on the Farey graph (see Section~\ref{sec-sl2}).

It was shown by Holm and Jorgensen~\cite{HJ} that  positive integral $SL_2$-tilings {\it with enough 1's} are in bijection with triangulations of infinite strips. However, we are not able to use this result directly, as \emph{a priori} it is not clear whether an $SL_2$-tiling with translational symmetry is required to contain any $1$'s at all. Nevertheless, after the present paper was written, it was noticed to the authors by Peter Jorgensen that the main result can be also deduced from a combination of results of~\cite{HJ} and~\cite{BHJ}, see Remark~\ref{J}.

\medskip
Notice that triangulations of annuli were previously associated with periodic {\it infinite frieze patterns} (infinitely tall tables starting with a row of $0$'s and a row of $1$'s and satisfying the diamond rule): given a triangulated annulus, a periodic infinite frieze pattern can be read off from each of the two boundary components of the annulus.
It was shown by Baur, Parsons and Tschabold~\cite{BPT} that every periodic infinite frieze pattern arises from some triangulated annulus. 
The two infinite frieze patterns and the $SL_2$-tiling arising from the same annulus are tightly connected:

\begin{prop}[Propositions~\ref{prop-quid},~\ref{prop-ets}]
Given an $SL_2$-tiling with translational symmetry, there are explicit formulae to compute the entries of the two infinite frieze patterns arising from the same annulus, and to compute the growth of these infinite frieze patterns (in terms of elements of the $SL_2$-tiling). 

\end{prop}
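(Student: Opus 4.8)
The plan is to run everything through the vector description of the tiling. By the main theorem and Short's characterisation recalled in Section~\ref{sec-sl2}, an $(m,n)$-periodic positive integral $SL_2$-tiling $(u_{i,j})$ is tame and corresponds to a triangulated annulus together with a pair of bi-infinite non-backtracking paths in the Farey graph; concretely there are primitive vectors $\mathbf{p}_i,\mathbf{q}_j\in\Z^2$ ($i,j\in\Z$) — the slopes of the lifts, in the universal cover, of the marked points on the two boundary components $\partial_1,\partial_2$ — with $\det(\mathbf{p}_i,\mathbf{p}_{i+1})=\det(\mathbf{q}_j,\mathbf{q}_{j+1})=1$ and $u_{i,j}=\det(\mathbf{p}_i,\mathbf{q}_j)>0$; here $u_{i,j}$ is the value of the frieze on the bridging arc joining the $i$-th and $j$-th lifts. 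Translational symmetry becomes a single matrix $M\in SL_2(\Z)$ — the holonomy of the annulus — with $\mathbf{p}_{i+m}=M\mathbf{p}_i$ and $\mathbf{q}_{j+n}=M\mathbf{q}_j$ for all $i,j$, and positivity forces $M$ to be hyperbolic with $\tr M>2$. With this dictionary in hand, both propositions reduce to bookkeeping with $2\times2$ determinants, the only tools being the three-term Plücker relation $\det(a,b)\det(c,d)=\det(a,c)\det(b,d)-\det(a,d)\det(b,c)$ for $a,b,c,d\in\Z^2$ and the Cayley--Hamilton identity $M+M^{-1}=(\tr M)\,I$.

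For the entries and quiddities (Proposition~\ref{prop-quid}): by the construction of Section~\ref{sec mn}, the infinite frieze read off from $\partial_1$ has, in the position corresponding to the arc joining marked points $i$ and $i+r$ (i.e.\ in row $r$), the value of the frieze on that arc, which in the universal cover is $\det(\mathbf{p}_i,\mathbf{p}_{i+r})$; this is positive for $r\ge 1$ because the Farey path of a positive tiling does not backtrack. The Plücker relation with $(a,b,c,d)=(\mathbf{p}_i,\mathbf{p}_{i+r},\mathbf{q}_j,\mathbf{q}_{j+1})$ gives
$$\det(\mathbf{p}_i,\mathbf{p}_{i+r})=u_{i,j}\,u_{i+r,j+1}-u_{i,j+1}\,u_{i+r,j},$$
which in particular is independent of $j$; hence the $(i,i+r)$-entry of the $\partial_1$-frieze equals $u_{i,j}u_{i+r,j+1}-u_{i,j+1}u_{i+r,j}$ for any $j$, and symmetrically the $(j,j+r)$-entry of the $\partial_2$-frieze equals $u_{i,j}u_{i+1,j+r}-u_{i+1,j}u_{i,j+r}$ for any $i$. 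Sanity checks: $r=0$ yields the row of $0$'s, $r=1$ yields an adjacent minor of the tiling, i.e.\ the row of $1$'s, and the diamond rule for these rows is precisely the Plücker relation applied to the $\mathbf{p}$- (resp.\ $\mathbf{q}$-) sequence. For the quiddity one can be sharper: writing $\mathbf{p}_{i+1}=a_i\mathbf{p}_i-\mathbf{p}_{i-1}$ and pairing with $\mathbf{q}_j$ gives $u_{i-1,j}+u_{i+1,j}=a_i u_{i,j}$ with $a_i$ independent of $j$, so the quiddity of the $\partial_1$-frieze at $i$ is $a_i=(u_{i-1,j}+u_{i+1,j})/u_{i,j}$, and likewise for $\partial_2$.

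For the growth (Proposition~\ref{prop-ets}): recall that the growth coefficient of a period-$m$ infinite frieze is the integer $d$ characterised by $m_{i,i+r+m}+m_{i,i+r-m}=d\cdot m_{i,i+r}$ for $r\ge m$. From Cayley--Hamilton, $\mathbf{p}_{i+r+m}+\mathbf{p}_{i+r-m}=(M+M^{-1})\mathbf{p}_{i+r}=(\tr M)\mathbf{p}_{i+r}$; pairing with $\mathbf{p}_i$ and using positivity of the three determinants involved (valid for $r\ge m$) identifies $d$ with $\tr M$. Running the same computation with the $\mathbf{q}$-sequence shows the $\partial_2$-frieze has the same growth coefficient, so the growth is in fact an invariant of the annulus (and of the tiling) — a bonus observation worth recording. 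To express $d=\tr M$ through the tiling, take $r=m$, use $m_{i,i}=0$, and substitute the entry formula together with $(m,n)$-periodicity $u_{i+km,\ell}=u_{i,\ell-kn}$:
$$d=\frac{\det(\mathbf{p}_i,\mathbf{p}_{i+2m})}{\det(\mathbf{p}_i,\mathbf{p}_{i+m})}=\frac{u_{i,j}\,u_{i,j+1-2n}-u_{i,j+1}\,u_{i,j-2n}}{u_{i,j}\,u_{i,j+1-n}-u_{i,j+1}\,u_{i,j-n}}\qquad(\text{any }i,j);$$
expanding $M$ in the basis $(\mathbf{p}_i,\mathbf{p}_{i+1})$ gives the equivalent polynomial expression $d=(u_{i,j}u_{i+1,j+1-n}-u_{i,j+1}u_{i+1,j-n})-(u_{i+1,j}u_{i,j+1-n}-u_{i+1,j+1}u_{i,j-n})$.

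The only non-mechanical point — the hard part, such as it is — is pinning down conventions: one has to verify that the construction of Section~\ref{sec mn}, under the paper's shifted indexing, really places $\det(\mathbf{p}_i,\mathbf{p}_{i+r})$ in row $r$ of the boundary frieze, and one must track signs carefully so that, thanks to positivity of all determinants in sight, absolute values can be dropped, the defining recurrence of the growth coefficient holds with honest $+$ signs, and $\tr M>2$ comes out with the correct sign. Everything else is an immediate consequence of the Plücker and Cayley--Hamilton identities.
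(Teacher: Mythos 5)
Your argument is essentially correct, but it takes a genuinely different route from the paper. The paper stays on the surface side throughout: the quiddity formula of Proposition~\ref{prop-quid} and the entry formula of Proposition~\ref{prop-ets} are obtained by applying the Ptolemy relation to the quadrilaterals $\hat P_{i-1}\hat P_i\hat P_{i+1}\hat Q_j$ and $\hat P_k\hat P_l\hat Q_{j+1}\hat Q_j$ in the universal cover (Fig.~\ref{fig-periph}), and the growth formula is obtained from the skein relation for the closed curve on the annulus together with the trace interpretation of Remark~\ref{rm-growth}. You instead work entirely on the Farey/linear-algebra side of Theorem~\ref{thm-sh}: Pl\"ucker in place of Ptolemy, and a holonomy matrix $M$ plus Cayley--Hamilton in place of the skein/trace argument. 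This is more elementary (no cluster-algebra or skein input) and gives the ``same growth for both boundary friezes'' statement for free, but it leaves you with three debts the paper does not incur. First, the existence of $M\in SL_2(\Z)$ with $\mathbf{p}_{i+m}=M\mathbf{p}_i$, $\mathbf{q}_{j+n}=M\mathbf{q}_j$ needs an argument: it follows from the uniqueness (modulo $SL_2(\Z)$ and sign) in Theorem~\ref{thm-sh} applied to the shifted pair of paths, or from Remark~\ref{rem-hyp}; you assert it. Second, you must actually identify $\det(\mathbf{p}_i,\mathbf{p}_{i+r})$ (up to sign) with the value of the annulus frieze on the peripheral arc $P_iP_{i+r}$, i.e.\ with the entry of the boundary infinite frieze in the sense of \cite{BPT}; you flag this but do not close it. The cleanest fix inside your framework is to note that both tables are infinite frieze patterns determined by their quiddity rows, and that the quiddities agree: your recursion $\mathbf{p}_{i+1}=a_i\mathbf{p}_i-\mathbf{p}_{i-1}$ gives $a_i$ on one side, while the Ptolemy relation in $\hat P_{i-1}\hat P_i\hat P_{i+1}\hat Q_j$ (the paper's proof of Proposition~\ref{prop-quid}) gives the same $a_i$ on the other. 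Third, signs: with the paper's conventions (rows numbered bottom to top, Remark~\ref{warning}, and one path reversed, Remark~\ref{rem-paths}) the adjacent minors satisfy $u_{i,j}u_{i+1,j+1}-u_{i,j+1}u_{i+1,j}=-1$, so your entry formula is the negative of the one in Proposition~\ref{prop-ets}; your growth formula is unaffected since numerator and denominator flip together. Two smaller remarks: your characterisation of the growth via the three-term recursion is a result of \cite{BFPT}, not the paper's definition $v_{i,i+n+1}-v_{i+1,i+n}$, though the latter follows directly in your setup from $\det(Mx,y)+\det(x,My)=(\tr M)\det(x,y)$; and the paper's much simpler growth formula $(u_{i+m,j}+u_{i,j+n})/u_{i,j}$ drops straight out of your own computation by pairing $\mathbf{p}_{i+m}+\mathbf{p}_{i-m}=(\tr M)\,\mathbf{p}_i$ with $\mathbf{q}_j$ and using $u_{i-m,j}=u_{i,j+n}$, so there is no need for the more elaborate ratio you wrote down.
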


Given a finite piece of table filled by positive integers, it may not be immediately clear whether this piece can be extended to an $(m,n)$-periodic $SL_2$-tiling. We address this question in terms of a zig-zag of positive integers (or more precisely, lattice paths between positions $(i,j)$ and $(i+m,j+n)$). Notice that such a zig-zag always extends uniquely to an $(m,n)$-periodic table with unit $2\times 2$ determinants, however, {\it a priori} it is not clear whether all entries of the table will be integers. 

\begin{prop}[Proposition~\ref{zig-zag}]  
Given a  lattice path between positions $(i,j)$ and $(i+m,j+n)$ such that $u_{i',j'}$ is positive integer for all $(i',j')$ in the lattice path, and $u_{i,j}=u_{i+m,j+n}$, there are $m+n$ explicit conditions to check: if all these conditions hold then there is a unique way to extend the given collection to a positive integral $SL_2$-tiling, otherwise, there is none.  
  
\end{prop}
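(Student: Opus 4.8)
The plan is to reduce everything to a single round of the diamond propagation. First I would note that we may take the lattice path $P$ to be a monotone staircase --- a ``zig-zag'' in the sense of the statement --- joining $(i,j)$ and $(i+m,j+n)$; translating it repeatedly by $(m,n)$ yields a bi-infinite monotone staircase $\widehat P$ cutting $\Z^2$ into two staircase half-planes. As recalled just before the statement, the prescribed positive integers on $P$, together with $u_{i,j}=u_{i+m,j+n}$, extend uniquely to an $(m,n)$-periodic table $T=(u_{i',j'})$ all of whose adjacent $2\times2$ minors equal $1$: one simply solves the diamond relation, repeatedly, for the single unknown entry of a unit square three of whose entries are known. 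The entries of $T$ are a priori only rational, and the proposition is the claim that $T$ is a positive integral $SL_2$-tiling exactly when $m+n$ explicit conditions hold.

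Next I would pin down the $m+n$ conditions as one per vertex of a fundamental domain of $\widehat P$ (there are $m+n$ of them). At a vertex $v_k$ where $\widehat P$ turns, the diamond relation for the unit square tucked into the concave side of the turn forces a new entry equal to $(u_{v_{k-1}}u_{v_{k+1}}-1)/u_{v_k}$ in terms of the three path-entries at and around $v_k$, and the condition is that this be a positive integer. At a vertex where $\widehat P$ goes straight, the quantity $(u_{v_{k-1}}+u_{v_{k+1}})/u_{v_k}$ must be the --- necessarily constant along the straight run --- coefficient of the relevant three-term recursion of $T$, and the condition is that it be a positive integer matching the value prescribed at the previous vertex of the run. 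A short count (a straight run of $\ell$ steps carrying $\ell-1$ conditions, and each run boundary being a turn) shows these total exactly $m+n$, uniformly in $P$. With this in hand one direction is immediate: if $T$ is a positive integral $SL_2$-tiling then all the entries and recursion coefficients involved are positive integers, so the conditions hold; and since $T$ is the \emph{only} $(m,n)$-periodic unit-minor table matching the data on $P$, failure of any condition means no positive integral $SL_2$-tiling extends the data at all.

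For the converse I would argue as follows. If all $m+n$ conditions hold, then one round of propagation produces an $(m,n)$-periodic width-two strip $S$ around $\widehat P$, filled with positive integers and with all adjacent minors $1$. Such a strip already records a triangulation of the annulus with $m$ and $n$ marked points on its boundary circles --- the bridging arcs across the annulus can be read off from a fundamental domain of $S$ via the construction of Section~\ref{sec mn} --- and by the main theorem (Theorem~\ref{thm-main}, Corollary~\ref{cor-bijection}) this triangulation is the one of a genuine positive integral $(m,n)$-periodic $SL_2$-tiling $T'$; since $T'$ necessarily restricts to $S$, it agrees with the prescribed data on $P\subset S$, and uniqueness of the periodic unit-minor extension gives $T'=T$. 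Hence $T$ is a positive integral $SL_2$-tiling. (Alternatively one can bypass the annulus and use Short's description~\cite{Sh}: the strip $S$ determines the two bi-infinite Farey-graph paths on a fundamental domain together with the hyperbolic monodromy realising the period, and the entries of $T$, being determinants of primitive integral vectors attached to those paths, are then automatically positive integers.)

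The hard part is exactly this converse: showing that \emph{no further conditions are needed}, i.e.\ that positivity and integrality of the one extra layer already propagate to all of $T$. The naive continuation of the diamond propagation into deeper layers again produces quotients of the shape $(AB-1)/C$, none individually visibly a positive integer, so some rigidity input is essential --- routing it through the triangulated annulus and Theorem~\ref{thm-main} (or through the Farey-graph picture) is what closes the gap. A secondary but genuinely necessary point is the bookkeeping behind the count: that the first layer carries exactly $m+n$ independent conditions, that the recursion coefficients read off the straight runs are forced to be mutually consistent, and that the squares sitting at the turns of $\widehat P$ are shared in the controlled way the count assumes.
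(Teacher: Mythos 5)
Your setup (periodic propagation from the lattice path, one condition per vertex, necessity of the conditions) matches the paper's statement, but the sufficiency direction — the actual content of the proposition — is not proved, and your proposed way of closing it does not work. You claim that the width-two strip $S$ ``records a triangulation of the annulus'' and that Theorem~\ref{thm-main}/Corollary~\ref{cor-bijection} then yields a positive integral tiling $T'$ restricting to $S$. This is a non sequitur: the only triangulation visible in the data is the path triangulation $T_\Pi$ of $A_{m,n}$ (arc $P_rQ_s$ for each $(r,s)$ in the path), and under Corollary~\ref{cor-bijection} a triangulation corresponds to the tiling whose frieze is \emph{unitary} on it, i.e.\ with $1$'s along the path — not to the tiling carrying your prescribed values. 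Nothing in your argument explains why the deeper layers of the propagation, which as you note are again quotients of the form $(AB\pm1)/C$, come out integral; you flag this as ``the hard part'' and then route it through a bijection that presupposes exactly what has to be shown (that the path data extends to a positive integral frieze on $A_{m,n}$). The Farey alternative has the same problem: Short's theorem classifies (tame, integral) tilings that you do not yet have, and producing the two integer paths with the correct cyclic order from a width-two strip alone is not automatic.

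The paper closes this gap with cluster-algebra input rather than with Theorem~\ref{thm-main}. The prescribed entries $u_{i_k,j_k}$ are read as the evaluation of the initial cluster attached to the seed $T_\Pi$ of the cluster algebra of $A_{m,n}$ (Remark~\ref{rem-cl}); positivity of \emph{all} arc values then follows from \cite[Theorem 3.1]{P}, and integrality of all cluster variables follows from \cite[Corollary 2.1]{BFZ}, whose hypothesis — that one mutation in each of the $m+n$ directions stays integral — is precisely the list of $m+n$ conditions (the all-$1$'s case is the unitary triangulation and is immediate). If you want a correct proof you need this (or an equivalent) rigidity statement; the annulus bijection alone does not supply it. A minor separate point: with the paper's indexing (row index increasing upward) the entry forced at a turn is $(u_{k-1}u_{k+1}+1)/u_k$, not $(u_{k-1}u_{k+1}-1)/u_k$; your sign corresponds to the opposite row convention, against which the paper explicitly warns.
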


Finally, we investigate the appearance of $1$'s in an $SL_2$-tiling with translational symmetries:

\begin{prop}[Proposition~\ref{prop-ears}]
  Given an $(m,n)$-periodic $SL_2$-tiling, either there exists a zig-zag of 1's between $(i_0,j_0)$ and $(i_0+m,j_0+n)$, or there exists $i_0$ (or $j_0$) such that the row $u_{i_0,j}$ is the sum of rows 
  $u_{i_0-1,j}$ and $u_{i_0+1,j}$ (respectively, the column  $u_{i,j_0}$ is the sum of the columns 
  $u_{i,j_0-1}$ and $u_{i,j_0+1}$).  In  the latter case, removing all rows $u_{i_0 +mt,j}$ (resp., all columns $u_{i,j_0 +nt}$) for all $t \in \Z$ leads
to an $(m-1,n)$-periodic $SL_2$-tiling (resp., $(m, n-1)$-periodic). 

\end{prop}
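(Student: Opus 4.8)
The plan is to pass through the bijection of the Main Theorem and reduce the statement to a combinatorial dichotomy for a triangulation $T$ of an annulus $\mathcal{A}_{m,n}$ carrying $m$ marked points on one boundary component $\partial_1$ and $n$ on the other, $\partial_2$: \emph{either every marked point of $\mathcal{A}_{m,n}$ is an endpoint of some bridging arc of $T$, or $T$ has an ear}, i.e.\ a marked point lying in a single triangle both of whose sides at that point are boundary segments. Granting this, the first alternative will produce the required zig-zag of $1$'s, and the second the row (or column) relation; the reindexing assertion is then bookkeeping.

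\emph{Proof of the dichotomy.} Every triangulation of an annulus contains at least one bridging arc (otherwise every triangle of $T$ would have all vertices on one boundary component, separating the two components and contradicting connectedness), so the bridging arcs of $T$ are non-empty; cutting $\mathcal{A}_{m,n}$ along all of them yields disk regions, and the bridging arcs of $T$ acquire a cyclic order around the annulus which, unfolded, is a bi-infinite monotone sequence of positions $(i,j)$ repeating with one period over $m+n$ steps. I would first show that two cyclically consecutive bridging arcs $\gamma=(a,b)$ and $\gamma'=(a',b')$ must share a vertex: otherwise the region between them meets both $\partial_1$ and $\partial_2$ in arcs, and the triangle of $T$ in that region incident to the edge $\gamma$ is forced to carry a bridging edge other than $\gamma$, contradicting consecutiveness. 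Say $a=a'$; then the region between $\gamma$ and $\gamma'$ is a disk with apex $a$ and one side on $\partial_2$, triangulated using only arcs joining points of $\partial_2$, so $a$ lies in the single triangle $(a,b,b')$. If in addition every marked point of $\partial_2$ lies on a bridging arc of $T$, then no marked point is strictly between $b$ and $b'$ (such a point would support a bridging arc trapped in the region, hence ending at $a$, contradicting consecutiveness again), so $b$ and $b'$ are adjacent on $\partial_2$. Hence if every marked point on both components lies on a bridging arc, cyclically consecutive bridging arcs differ by exactly one marked point, and running once around $\mathcal{A}_{m,n}$ produces a monotone lattice path of $m+n$ bridging arcs from some $(i_0,j_0)$ to $(i_0+m,j_0+n)$; since the frieze takes the value $1$ on every arc of $T$ and on every boundary segment, this is a zig-zag of $1$'s. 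If instead some marked point $v$ on, say, $\partial_1$ lies on no bridging arc, then every arc of $T$ at $v$ is a diagonal of $\partial_1$; if there is none, $v$ is already an ear, and otherwise the Conway--Coxeter argument — take an inclusion-minimal polygon cut off from $\partial_1$ by a diagonal of $T$, and observe it can contain no further diagonal, hence is a single triangle — yields an ear on $\partial_1$. The case of $\partial_2$ is symmetric, and this establishes the dichotomy.

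\emph{The ear alternative.} For any positive $SL_2$-tiling, subtracting two horizontally adjacent minor relations gives $u_{i-1,j}+u_{i+1,j}=c_i\,u_{i,j}$ with $c_i$ independent of $j$; under the correspondence $c_i$ is the number of triangles of $T$ incident to the $i$-th marked point of $\partial_1$ (Proposition~\ref{prop-quid}; alternatively this follows directly from the Ptolemy relation in the quadrilateral with vertices $i-1,i,i+1,j$ once $(i-1,i+1)\in T$), so $c_{i_0}=1$ precisely when that point is an ear, and then $u_{i_0,j}=u_{i_0-1,j}+u_{i_0+1,j}$ for every $j$ — the asserted row relation. Deleting the ear from $T$ — remove $i_0$ and its base arc $(i_0-1,i_0+1)\in T$ and merge the two boundary segments at $i_0$ — produces a triangulation $T'$ of $\mathcal{A}_{m-1,n}$ whose bridging arcs are exactly those of $T$ not incident to $i_0$, with unchanged frieze values; by the Main Theorem $T'$ corresponds to an $(m-1,n)$-periodic $SL_2$-tiling, which is precisely the table obtained from $u$ by deleting all rows $u_{i_0+mt,\,j}$, $t\in\Z$. (Equivalently, from $u_{i_0,j}=u_{i_0-1,j}+u_{i_0+1,j}$ one checks directly that after the deletion the new horizontally adjacent minors, now involving rows $i_0-1$ and $i_0+1$, are still $1$, and that the row period becomes $m-1$.) The column statement is entirely symmetric, with the analogous quantity $d_j$ in place of $c_i$.

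The step I expect to be the main obstacle is the dichotomy: controlling the disk regions cut out by the bridging arcs of $T$, and extracting from the hypothesis that every marked point lies on a bridging arc the fact that cyclically consecutive bridging arcs are one marked point apart (and, dually, producing an ear when some marked point is uncovered). Matching the indices that appear along the way with adjacency in the $SL_2$-tiling relies on the explicit correspondence of Section~\ref{sec mn}; once that is in place, the ear-removal and period-drop bookkeeping is routine.
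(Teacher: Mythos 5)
Your proposal is correct and follows essentially the same route as the paper: pass through the bijection to the unitary triangulation of $A_{m,n}$, split according to whether all arcs are bridging (giving a lattice path of $1$'s) or a peripheral arc/ear exists (giving the row or column sum relation via the Ptolemy relation in the quadrilateral $\hat P_{i_0-1}\hat P_{i_0}\hat P_{i_0+1}\hat Q_j$), and then delete the ear to drop the period. The only difference is that you spell out in detail the combinatorial dichotomy and the row-deletion check that the paper dismisses as easy to see.
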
  

This result agrees with the results in~\cite{HJ}.

We note that twice periodic $SL_2$-tilings containing non-positive integers were previously studied by Morier-Genoud, Ovsienko, and Tabachnikov~\cite{MGOT}: it was proved that (under some assumptions on the signs of entries) such $SL_2$-tilings arise from a pair of closed paths on the Farey graph.

\bigskip
\noindent
The paper is organised as follows. In Section~\ref{sec-prelim} we recall some necessary notions such as friezes from surfaces (more specifically, from annuli), Farey graph and its connection to $SL_2$-tilings.
In Section~\ref{sec n}, we introduce {\it bi-infinite quasi-friezes (BiQF)}: it is a bi-infinite modification of Conway--Coxeter frieze pattern satisfying a quasi-diamond rule $ad-bc=-1$. We study $n$-periodic  BiQFs (i.e. BiQFs with horizontal translational symmetry $u_{i,j}=u_{i+n,j+n}$, see Fig.~\ref{fig-u}) and conclude that they are in bijection with triangulations of annuli $A_{n,n}$ with $n$ marked points at every boundary component. In Section~\ref{sec mn}, we consider BiQFs with non-horizontal translational symmetry $u_{i,j}=u_{i+m,j+n}$ and show that such BiQFs are in bijection with triangulations of annuli $A_{m,n}$ (with $m$ marked points on one boundary component and $n$ on another).  In Section~\ref{sec-sl2-transl} we reformulate the above results for $SL_2$-tilings (as a BiQF rotated by $45^\circ$ counterclockwise is an $SL_2$-tiling) and show that $SL_2$-tilings admit no translational symmetries other than the ones considered above. We use results of~\cite{Sh} to prove Theorem~\ref{thm-main}. In Section~\ref{sec-properties} we explore the properties of $(m,n)$-periodic $SL_k$-tilings.

\subsection*{Acknowledgements}
We are grateful to Andrei Zabolotskii for useful discussions, and to Peter Jorgensen for helpful suggestions. The first author was supported by Fonds de recherche du Qu\'ebec - Nature et technologie, grant  327749 "Research Support for New Academics".

\section{Preliminaries}
\label{sec-prelim}

In this section we recall some results concerning friezes on annuli, $SL_2$-tilings and Farey graph.

\subsection{Friezes on annuli}
\label{fr-on-surf}
Consider a marked surface  $(S,M)$ where $S$ is a surface and $M$ is a collection of marked points on it. In this paper, $S$ will be an annulus,  and $M$ will be a collection of $m$ marked points  $P_1,\dots,P_m$ on one boundary component and $n$ marked points $Q_1,\dots,Q_n$ on the other boundary component of the annulus, $m,n>0$. We denote such a marked annulus by $A_{m,n}$.

By an arc on $(S,M)$ we mean a non-self-intersecting curve in $S$ with endpoints in $M$, considered up to isotopy. On the annulus $A_{m,n}$, the arcs with two endpoints on different boundary components will be called {\it bridging}, and all other arcs will be called {\it peripheral}.
A {\it triangulation} of $(S,M)$ is  a collection of arcs subdividing $S$ into triangles with vertices in $M$.
By a quadrilateral $ABCD$ on $(S,M)$ we mean a collection of disjoint arcs $AB,BC,CD,DA$ cutting out a disc $ABCD$ from $S$.

Given a marked surface $(S,M)$, an {\it integral positive frieze} $F$ on $(S,M)$ (or simply a
{\it frieze} for short) is an assignment of positive integers to all arcs on $S$, such that $F(\delta)=1$ for every boundary arc $\delta$, and for every quadrilateral $ABCD$ on $(S,M)$ the \emph{Ptolemy relation} holds:
$$ F(AC)\cdot F(BD)=F(AB)\cdot F(CD) + F(BC)\cdot F(AD).
$$

A frieze $F$ on $(S,M)$ is called {\it unitary} if there exists a triangulation $T$ of $(S,M)$ such that  $F(\gamma)=1$ for all $\gamma\in T$.
In this case, we will say that this triangulation $T$ is  the {\it unitary triangulation} for $F$ (such a triangulation is unique, and moreover, in any positive integral frieze on a marked surface, the Ptolemy relation implies that two arcs labeled 1 cannot intersect each other).

Conway and Coxeter~\cite{CC} showed that friezes on polygons are always unitary. Furthermore, it was shown by Gunawan and Schiffler~\cite{GS} that every positive integral frieze on an annulus is unitary (and hence, there is a bijection between positive integral friezes on annuli and triangulations of annuli). 

\begin{remark}(Friezes on surfaces and cluster algebras)
  \label{rem-cl}
  Friezes on marked surface $(S,M)$ can be understood as homomorphisms of the corresponding cluster algebra $\mathcal A(S,M)$ to $\R$. More precisely, according to~\cite{FST,FT} cluster variables of $\mathcal A(S,M)$ correspond to arcs on $(S,M)$, where clusters correspond to triangulations. Now, the values of the frieze are precisely the images of cluster variables.  In particular, positive integral friezes are homomorphism with images of all cluster variables being positive integers.   It follows from~\cite[Theorem 3.1]{P} that if values of a frieze on a triangulation are all positive, then the whole frieze is positive.
  
\end{remark}
  
\subsection{$SL_2$-tilings}
An $SL_2$-tiling is a bi-infinite matrix  such that all adjacent $2 \times 2$ minors are equal to  1.
An $SL_2$-tiling is {\it tame} if every adjacent $3\times 3$ determinant vanishes.
We will denote the elements of an $SL_2$-tiling $u_{i,j}$ and index them as in Fig.~\ref{u-sl}.

In this paper we will discuss {\it positive integral} $SL_2$-tilings.
A positive integral $SL_2$-tilings is always tame (this immediately follows from Dodgson’s condensation rule, see~e.g.~\cite[Section 3]{BR}).

\begin{figure}[!h]
  \epsfig{file=./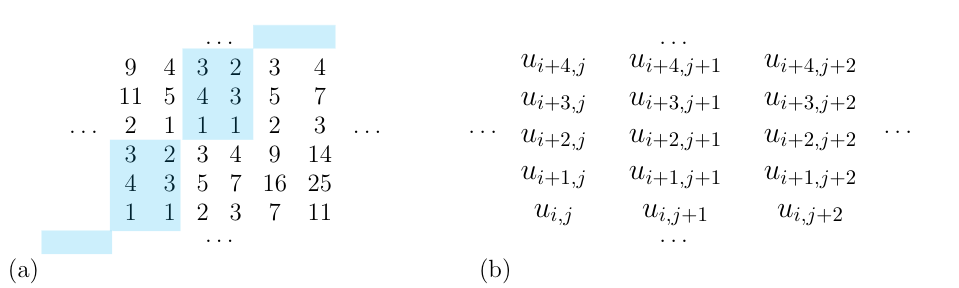,width=0.8\linewidth}  
\caption{(a) An example of a $(3,2)$-periodic $SL_2$-tiling; (b) the numeration of entries in $SL_2$ tiling.}
\label{u-sl}
\end{figure}

\begin{remark}[Warning on the indexing]
\label{warning}  
Notice, that in our indexing the numeration of rows grows bottom to top (unlike many other sources), this is convenient for ensuring the symmetry between rows and columns in further considerations.
This should be taken into account when comparing to results of other papers
% ~\cite{HJ}
or using the formulae from Section~\ref{sec-properties}.

\end{remark}

\subsection{$SL_2$-tilings and paths on the Farey graph}
\label{sec-sl2}

The Farey graph $\mathcal F$ is the graph with vertices at $\Q\cup \{\infty\}$, where the
two vertices given by reduced fractions $p/q$ and $r/s$ are connected by an edge whenever $|ps-qr| = 1$.

It is convenient to depict the Farey graph on the upper half-plane $\im  z>0$  (connecting the points  $p/q$ and $r/s$ on $\R$ by a half-circle). The arcs of $\mathcal F$  triangulate the upper half-plane (into curvilinear triangles that can be interpreted as ideal triangles in hyperbolic geometry). Alternatively, one can map the same diagram to the disc, see Fig.~\ref{fig-farey}.
The group $SL_2(\Z)$ preserves $\mathcal F$ and acts transitively on the vertices of $\mathcal F$. 
In particular, every vertex of the Farey graph is incident to infinitely many edges.

\begin{figure}[!h]
  \epsfig{file=./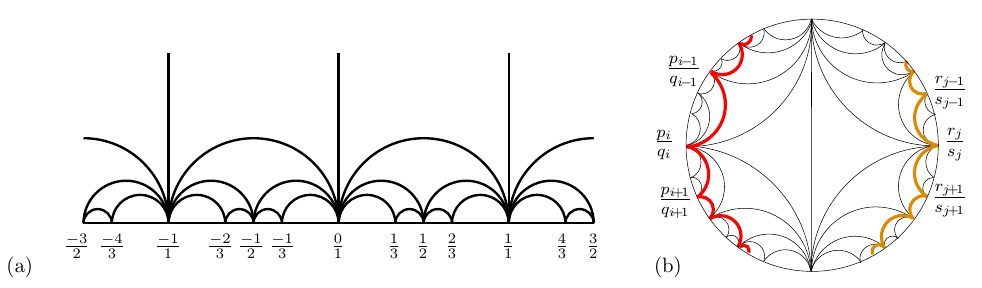,width=0.95\linewidth}  
\caption{(a) Farey graph in the upper half-plane; (b) same in the disc, with two
paths highlighted.}
\label{fig-farey}
\end{figure}

Denote $\det(p/q,r/s)=\begin{vmatrix} p&r\\q&s \end{vmatrix}$.
These determinants are the key for constructing $SL_2$-tilings from paths on Farey graph (see Theorem~\ref{thm-sh})
 due to the following  version of the Ptolemy relation  shown in~\cite{MGO}.
 
\begin{prop}[~\cite{MGO}]
 \label{det} 
  Given four reduced fractions $a_1/a_2, b_1/b_2,c_1/c_2,d_1/d_2\in \Q$ lying on the boundary of the disc in this cyclic order, the following holds
$$
\begin{vmatrix} a_1&c_1\\a_2&c_2 \end{vmatrix}\cdot\begin{vmatrix} b_1&d_1\\b_2&d_2 \end{vmatrix}=
\begin{vmatrix} a_1&b_1\\a_2&b_2 \end{vmatrix}\cdot\begin{vmatrix} c_1&d_1\\c_2&d_2 \end{vmatrix}+
\begin{vmatrix} a_1&d_1\\a_2&d_2 \end{vmatrix}\cdot\begin{vmatrix} b_1&c_1\\b_2&c_2 \end{vmatrix}
$$

\end{prop}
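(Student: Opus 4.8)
The plan is to observe that the stated equality is nothing but the Grassmann--Plücker relation for $2\times 2$ minors, and therefore holds for an \emph{arbitrary} quadruple of vectors in a two-dimensional space. In particular, the hypotheses that the fractions be reduced and that they lie in this cyclic order on the boundary of the disc are not needed for the algebraic identity itself; they serve only to interpret it as a Ptolemy relation, i.e.\ to single out which of the three bilinear groupings plays the role of ``product of the diagonals''. Concretely, I would regard the fraction $a_1/a_2$ as the integer vector $a:=(a_1,a_2)$, and likewise $b,c,d$ (with $\infty=1/0$ corresponding to $(1,0)$), set $[x,y]:=x_1y_2-x_2y_1=\begin{vmatrix}x_1&y_1\\x_2&y_2\end{vmatrix}$, and reformulate the claim as the identity
$$[a,c]\,[b,d]=[a,b]\,[c,d]+[a,d]\,[b,c],\qquad a,b,c,d\in\R^2.$$

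First I would give the quick mechanical verification: expand each of the three products into its four monomials in the eight coordinates; on the right-hand side four of the eight monomials cancel in pairs, and the remaining four are precisely the monomials of $[a,c]\,[b,d]$. This is a couple of lines and settles the proposition. If a more conceptual argument is preferred, I would instead argue by cases. If $a=0$, every term on both sides carries a factor of the form $[a,\,\cdot\,]=0$, so the identity reads $0=0$. If $a\ne 0$ but $b=\lambda a$, then $[a,b]=0$, $[b,c]=\lambda[a,c]$ and $[b,d]=\lambda[a,d]$, and both sides collapse to $\lambda\,[a,c]\,[a,d]$. In the remaining case $a$ and $b$ are linearly independent, hence a basis, so $c=\alpha a+\beta b$ and $d=\gamma a+\delta b$ for some scalars; bilinearity and antisymmetry of $[\,\cdot\,,\cdot\,]$ give $[a,c]=\beta[a,b]$, $[b,d]=-\gamma[a,b]$, $[c,d]=(\alpha\delta-\beta\gamma)[a,b]$, $[a,d]=\delta[a,b]$ and $[b,c]=-\alpha[a,b]$, whence both sides of the identity equal $-\beta\gamma\,[a,b]^2$.

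Finally I would add a remark disposing of the only point that might cause a moment's worry, namely the choice of representative $\pm(p,q)$ of a reduced fraction $p/q$: negating any one of the four vectors multiplies both sides of the displayed identity by $-1$, so the relation is insensitive to every sign convention, which is exactly what makes it legitimate to speak of $\det(p/q,r/s)$ in this context at all. I do not anticipate any real obstacle: the single piece of content is recognising that this is the Plücker relation, after which everything is bookkeeping, and the cyclic-order hypothesis — natural from the Ptolemy viewpoint — is simply inherited from the source \cite{MGO}.
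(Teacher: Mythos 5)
Your proof is correct, and it is worth noting that the paper itself gives no argument for this proposition at all: it is quoted from \cite{MGO} as a known fact. What you supply is a complete, self-contained verification, namely the recognition that the displayed identity is the Grassmann--Pl\"ucker relation
$[a,c][b,d]=[a,b][c,d]+[a,d][b,c]$
for $2\times 2$ minors, valid for \emph{any} four vectors in $\R^2$; your three-case argument (degenerate $a$, proportional $a,b$, and the generic case via the expansion $c=\alpha a+\beta b$, $d=\gamma a+\delta b$) checks out, as does the remark that replacing any representative $(p,q)$ by $(-p,-q)$ multiplies both sides by $-1$ and so does not affect the identity. Your observation that the reducedness and cyclic-order hypotheses are irrelevant to the signed identity is accurate; their role is interpretive rather than logical. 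The one point worth flagging is that this extra hypothesis is not merely decorative in the paper's usage: in the proof of Lemma~\ref{N} the authors invoke not only the identity but also the positivity of the individual determinants $\det(x,y)$ for points in the given cyclic order (with suitable representatives), and that sign control is exactly what the cyclic-order assumption provides; it is a separate (easy) fact not contained in the algebraic relation you prove. So your proof establishes the proposition as stated, while the cyclic-order hypothesis should be retained for the downstream positivity argument.
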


A bi-infinite sequence of reduced fractions $(p_i /q_i )_{i\in \Z}$ represents a \emph{path} in the Farey graph $\mathcal F$ if
$p_i q_{i+1}- q_i p_{i+1} = 1$ for all $i\in \Z$.
It was shown by Short~\cite{Sh} that every tame $SL_2$-tilings can be obtained from a pair of paths on $\mathcal F$.

\begin{theorem}[\cite{Sh}]
\label{thm-sh}  
\begin{itemize}
\item[(a)]  
There is a bijection between tame $SL_2$-tilings (modulo multiplication by $-1$) and pairs of paths  $({p_i/q_i},{r_j/s_j})$ on $\mathcal F$ (modulo simultaneous action of $SL_2(\Z)$). This bijection is given by
$$
u_{i,j}= \begin{vmatrix} p_i & r_j \\ q_i & s_j \end{vmatrix}.
$$

\item[(b)]
In particular, positive $SL_2$-tilings are in bijection with pairs of paths $((\frac{p_i}{q_i}),(\frac{r_j}{s_j}))$, where the vertices of the paths lie on the boundary of the disc clockwise in the following cyclic order: $...,\frac{p_{i+1}}{q_{i+1}}, \frac{p_i}{q_i},\frac{p_{i-1}}{q_{i-1}},\dots, \frac{r_{j-1}}{s_{j-1}}, \frac{r_j}{s_j}, \frac{r_{j+1}}{s_{j+1}}...\,$.

\end{itemize}  
\end{theorem}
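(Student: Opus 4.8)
The plan is to establish (a) by elementary linear algebra over $\mathbb{Z}^2$, and then to obtain (b) from it as a geometric refinement in the disc model of $\mathcal{F}$.

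\emph{Forward direction of (a).} Writing $v_i=(p_i,q_i)$ and $w_j=(r_j,s_j)$ as column vectors and denoting by $\det(x,y)$ the determinant of the matrix with columns $x,y$, the proposed entries are $u_{i,j}=\det(v_i,w_j)=p_is_j-q_ir_j$, and the path conditions read $\det(v_i,v_{i+1})=1$ and $\det(w_j,w_{j+1})=1$. I would verify the adjacent $2\times2$ minor from the purely algebraic identity underlying Proposition~\ref{det} (which, unlike the cyclic version stated there, needs no hypothesis on positions): for all $a,b,c,d\in\mathbb{R}^2$, $\det(a,c)\det(b,d)-\det(a,d)\det(b,c)=\det(a,b)\det(c,d)$; taking $a=v_i$, $b=v_{i+1}$, $c=w_j$, $d=w_{j+1}$ gives $u_{i,j}u_{i+1,j+1}-u_{i,j+1}u_{i+1,j}=\det(v_i,v_{i+1})\det(w_j,w_{j+1})=1$. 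Tameness is then automatic: every $3\times3$ block of $(u_{i,j})$ is a product of a $3\times2$ matrix (rows built from the relevant $v_i$) and a $2\times3$ matrix (columns built from the relevant $w_j$), hence has rank at most $2$ and vanishing determinant.

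\emph{Backward direction of (a).} Given a tame $SL_2$-tiling, I would first extract a three-term recursion along columns: for fixed $j$, the adjacent $3\times3$ determinant on rows $i-1,i,i+1$ and columns $j-1,j,j+1$ vanishes while its first two columns are independent (the $2\times2$ submatrix on rows $i-1,i$ is an adjacent minor, equal to $1$), so its third column is a combination $\lambda\cdot(\text{col }j{-}1)+\mu\cdot(\text{col }j)$; comparing the combinations obtained from rows $i-1,i,i+1$ and from rows $i,i+1,i+2$ on their two common rows — where $(u_{i,j-1},u_{i,j})$ and $(u_{i+1,j-1},u_{i+1,j})$ already span $\mathbb{R}^2$ — shows $\lambda,\mu$ depend only on $j$, and substituting into an adjacent $2\times2$ minor forces $\lambda=-1$. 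So $u_{i,j+1}=c_ju_{i,j}-u_{i,j-1}$ for all $i$, with $c_j$ independent of $i$ (and symmetrically along rows). Solving $x_{j+1}=c_jx_j-x_{j-1}$ with initial data $(\alpha_0,\alpha_1)=(1,0)$ and $(\beta_0,\beta_1)=(0,1)$, and using that such a sequence is determined by two consecutive terms, gives $u_{i,j}=\alpha_ju_{i,0}+\beta_ju_{i,1}$; then $p_i=u_{i,0}$, $q_i=u_{i,1}$, $s_j=\alpha_j$, $r_j=-\beta_j$ recover $u_{i,j}=p_is_j-q_ir_j$. One checks $(p_i/q_i)_i$ is a path — $p_iq_{i+1}-q_ip_{i+1}$ is an adjacent $2\times2$ minor, $=1$ up to the sign convention of the indexing in Figure~\ref{u-sl}, which also gives $\gcd(p_i,q_i)=1$ — and $(r_j/s_j)_j$ is a path because the Wronskian $\alpha_j\beta_{j+1}-\alpha_{j+1}\beta_j$ is preserved by the recursion and equals $1$ at $j=0$. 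For the ``modulo'' statements: a simultaneous $g\in SL_2(\mathbb{Z})$ fixes every $\det(v_i,w_j)$ and preserves the path conditions, so the map descends to orbits; for injectivity normalize $v_0\mapsto(1,0)$, $v_1\mapsto(0,1)$ (possible and unique since $\det(v_0,v_1)=1$), after which $w_j=(-u_{1,j},u_{0,j})$ is rigid and then $v_i$ is rigid ($w_0,w_1$ again form a basis), the only residual freedom being a global sign on one path, which is precisely multiplication of the tiling by $-1$.

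\emph{Part (b).} Here I would pass to the boundary circle of the disc model, on which each Farey vertex is a point and, for the vector representatives obtained by chaining along a path (each step has $\det(v_i,v_{i+1})=1>0$, hence moves monotonically in one rotational sense), the sign of $\det(v,w)$ records the cyclic position of $w$ relative to $v$. The hypothesis $u_{i,j}=\det(v_i,w_j)>0$ for all $i,j$ together with the path conditions then forces the $v$-vertices to lie in a single monotone arc in index order, the $w$-vertices likewise, and the two arcs not to interleave — which is exactly the displayed clockwise cyclic order. Conversely, for a pair of paths in that cyclic order, applying Proposition~\ref{det} to the consecutive quadruples $v_{i+1},v_i,w_j,w_{j+1}$ (which then occur in this cyclic order on the boundary) propagates positivity across the whole tiling. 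The hard part will be this last step: the linear algebra for (a) is essentially forced once one has the Plücker identity, the ``rank $\le2$'' description of tameness, and the row/column recursion, whereas in (b) one must rule out the possibility that the paths giving a positive tiling spiral around the boundary circle, and carry out the dictionary between determinant signs and cyclic orderings compatibly with the orientation conventions of Figures~\ref{u-sl} and~\ref{fig-farey}. It is this bookkeeping, rather than any single sharp idea, that requires care.
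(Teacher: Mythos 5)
First, a remark on the comparison you ask for: the paper does not prove Theorem~\ref{thm-sh} at all --- it is quoted from Short~\cite{Sh}, with only the cosmetic reindexing explained in Remark~\ref{rem-paths} --- so your proposal has to stand as a self-contained proof of Short's result. Part (a) as you outline it is essentially sound and follows the standard linear-algebra route: the Pl\"ucker identity gives the unit adjacent minors, the $3\times 2$ times $2\times 3$ factorisation gives tameness, and conversely the vanishing adjacent $3\times 3$ determinants yield a three-term recursion $u_{i,j+1}=c_j u_{i,j}-u_{i,j-1}$ whose two fundamental solutions reconstruct the second path, the Wronskian giving the path condition. Two points should be patched rather than waved at: you need $c_j\in\Z$ (it follows from $\gcd(u_{i,j},u_{i+1,j})=1$, which the unit minors provide), since otherwise $(r_j,s_j)$ need not be an integer vector and $r_j/s_j$ need not be a vertex of $\mathcal F$; and the sign conventions you defer to ``the indexing of Figure~\ref{u-sl}'' must be fixed once and carried through consistently, since they are exactly the issue addressed in Remark~\ref{rem-paths}.

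The genuine gap is part (b), and you name it yourself as ``the hard part'' without closing it. The assertion that ``the sign of $\det(v,w)$ records the cyclic position of $w$ relative to $v$'' is not yet a usable statement: a Farey vertex determines its lift to $\Z^2$ only up to sign, the path condition pins the lifts down only up to one global sign per path, and even with coherent lifts the sign of a single $2\times 2$ determinant compares directions of vectors in $\R^2$ (i.e.\ points on the double cover of the boundary circle), not positions of two points of $\overline{\R}$ in a cyclic order, which in any case involves three points. The forward implication --- that $u_{i,j}>0$ for all $i,j$ forces each path to move monotonically along the boundary without spiralling and forces the two paths not to interleave --- is precisely the substance of the positivity classification in~\cite{Sh}; in your text it is asserted (``then forces \dots which is exactly the displayed clockwise cyclic order''), not derived. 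The converse direction is also not a pure propagation argument: in a unit adjacent minor, positivity of three entries forces the fourth only when one solves in the right direction, so one cannot fill the whole plane from local data alone; one still needs the dictionary between coherent lifts, determinant signs and the cyclic order (in effect, a sign-refined version of the hypothesis in Proposition~\ref{det}) to conclude $\det(v_i,w_j)>0$ for every $i,j$. Until that dictionary is set up and the no-spiralling argument is actually carried out, part (b) remains a plan rather than a proof.
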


\begin{remark}
\label{rem-paths}  
Notice that in in~\cite{Sh} both paths follow the boundary of the disc clockwise, but we changed the direction of the path $(p_i/q_i)$ to adapt the result to our notation as the numeration of rows in our notation is bottom to top, see Remark~\ref{warning}.
  
\end{remark}

\section{Bi-infinite periodic quasi-friezes  (BiQF)}
\label{sec n}

By a {\it bi-infinite quasi-frieze} (BiQF) we mean
a bi-infinite grid of real numbers $u_{i,j}$, where $i,j\in\Z$, the entries are arranged as in Fig.~\ref{fig-u}(b),
and the entries in every small diamond  $a$\put(3,5){$b$}\put(3,-6){$c$} \quad  $d$  satisfy the  {\it quasi-diamond  rule}: $ad - bc = -1$.

\begin{figure}[!h]
  \epsfig{file=./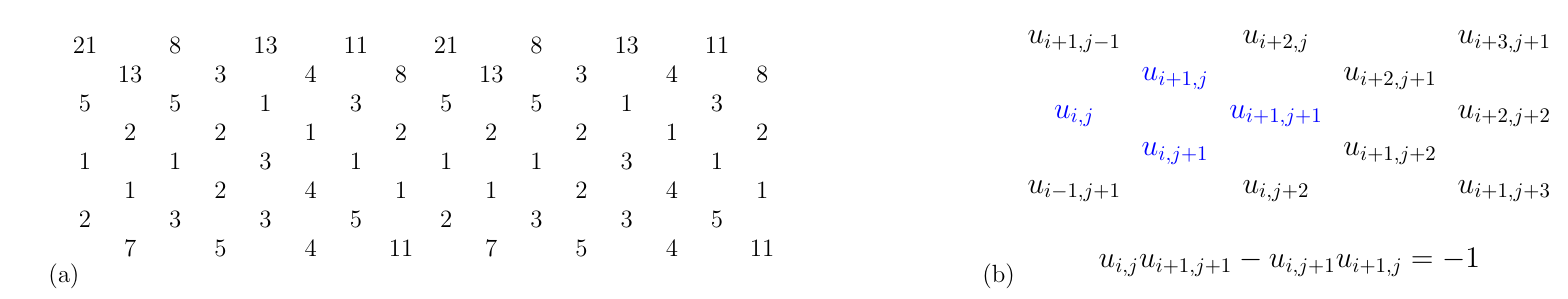,width=0.99\linewidth}  
\caption{Example of a $4$-periodic BiQF, indexing of its entries and the quasi-diamond rule. }
\label{fig-u}
\end{figure}

We will consider  {\em positive integral} BiQFs, so from now on we assume $u_{i,j}\in \Z$ and $u_{i,j}>0$.  

We say that a BiQF is $n$-{\em periodic} (with period $n$) if $u_{i+n,j+n}=u_{i,j}$ for all $i,j$.

\begin{example}
\label{ex-n}  
  Consider an annulus $A_{n,n}$ with $n$ marked points at each of the boundary components. Label the marked points on the two boundary components $P_1,\dots,P_n$ and $Q_1,\dots,Q_n$ respectively, so the numbering on each component grows counterclockwise as in Fig.~\ref{fig-A22} 
  (here $P_iP_{i+1}$ and  $Q_jQ_{j+1}$ are boundary arcs, and $P_{n+1}$ and $Q_{n+1}$ are identified with $P_1$ and $Q_1$ respectively).

    Consider a frieze $F$ on the annulus, and let $T$ be the triangulation of the annulus containing arcs $P_iQ_i$ and $P_iQ_{i+1}$ for all $i=1,\dots,n$. We also consider the universal cover of  $A_{n,n}$  which is an infinite strip with marked points $\hat P_i$ and $\hat Q_j$, $i,j\in\Z$, and the lift $\hat T$ of triangulation $T$ to it. Denote by $\hat F$ the lift of $F$ to the universal cover (i.e. $\hat F(\hat \gamma):=F(\gamma)$ for every lift $\hat \gamma$ of every arc $\gamma\in A_{n,n}$).
  Denote
  \begin{equation}
   \label{u=F} 
    u_{i,j}=\hat F(\hat P_i\hat Q_j),\qquad i,j\in\Z,
\end{equation}
    and arrange the numbers $u_{i,j}$ in a grid as in Fig.~\ref{fig-u}(b). Every small diamond then corresponds to a quadrilateral
    $P_iP_{i+1}Q_{j+1}Q_j$ with two opposite sides on two boundary components. Since $F(P_iP_{i+1})=F(Q_{j+1}Q_j)=1$, the Ptolemy relations in the quadrilaterals $P_iP_{i+1}Q_{j+1}Q_j$ imply quasi-diamond conditions on $u_{i,j}$, so we obtain an $n$-periodic BiQF.
    
In particular, the values of $F$ on the arcs of the triangulation $T$ give two consecutive rows of the BiQF (one row consists of $F(P_iQ_i)$, $i=1,\dots,n$, and another of $F(P_iQ_{i+1})$).
See Fig~\ref{fig-A22} for an example of a $3$-periodic BiQF arising from a frieze on the annulus $A_{3,3}$. 

\end{example}

\begin{figure}[!h]
  \epsfig{file=./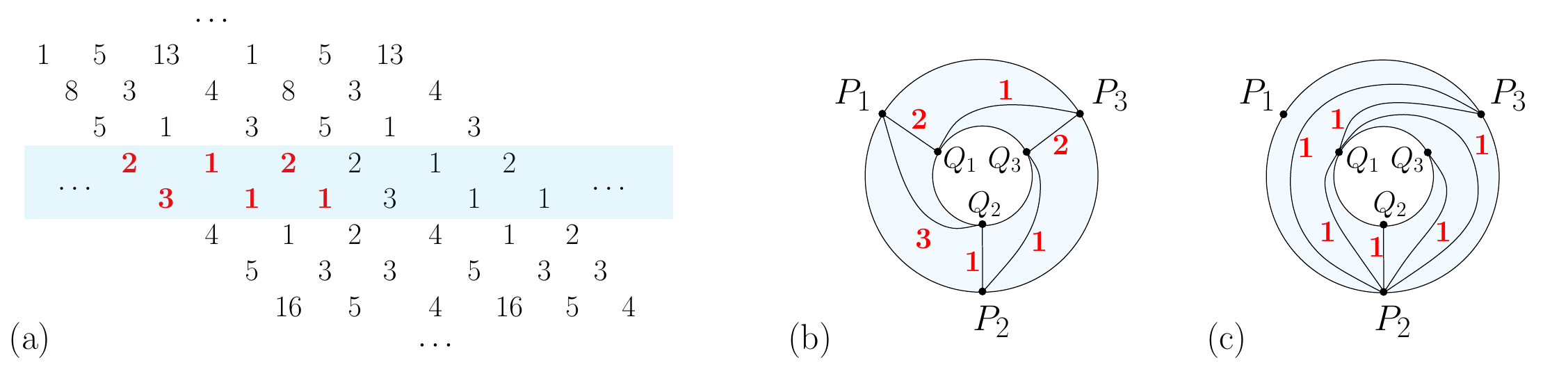,width=0.99\linewidth}  
\caption{Example of a $3$-periodic BiQF arising from a triangulated annulus $A_{3,3}$  (see Example~\ref{ex-n}): (a) BiQF; (b) triangulation $T$ and values $F(\gamma)$ for all $\gamma\in T$; (c) the corresponding unitary triangulation. }
\label{fig-A22}

\end{figure}

\begin{remark}
  \begin{itemize}
\item[(a)] Notice that all entries of the BiQF constructed in  Example~\ref{ex-n} correspond to the bridging arcs of the annulus (unlike the entries of infinite friezes in~\cite{BPT} which correspond to peripheral arcs).

\item[(b)] Consider two consecutive rows of a BiQF from Example~\ref{ex-n}, $u_{i,i}$ and $u_{i,i+1}$,  $i\in\Z$. Every pair of adjacent entries in these rows (i.e., $(u_{i,i},u_{i,i+1})$ or  $(u_{i,i+1},u_{i+1,i+1})$)  corresponds to a triangle of the triangulation $T$ (where the third side is a boundary arc).
  
\end{itemize}
\end{remark}

The following example is a counterpart of~\cite[(34)]{CC}. 

\begin{example}
\label{ex-fib}  
A $1$-periodic BiQF can be constructed from the annulus $A_{1,1}$, it has two consecutive rows of $1$'s (notice that $A_{1,1}$ has a unique triangulation up to Dehn twists). It is easy to check that rows are filled with even terms of the Fibonacci sequence, see Fig.~\ref{fig-fib}(a).

\end{example}   

\begin{figure}[!h]
  \epsfig{file=./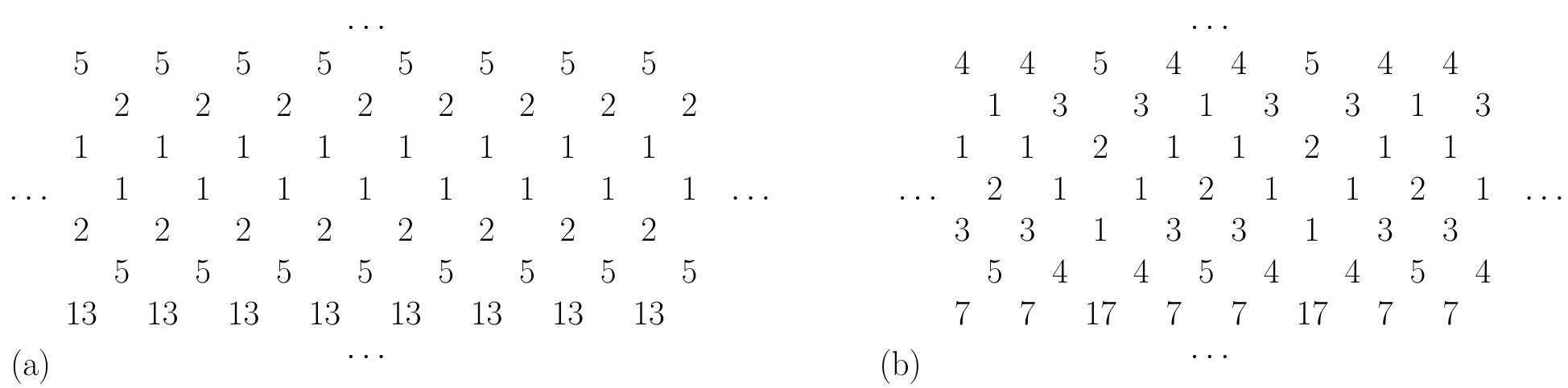,width=0.98\linewidth}  
\caption{(a) A 1-periodic BiQF arising from $A_{1,1}$ is filled with Fibonacci numbers, see Example~\ref{ex-fib}; (b) a 3-periodic BiQF invariant under a glide reflection, see Remark~\ref{rem-mob}.}
\label{fig-fib}

\end{figure}

Example~\ref{ex-n} shows for every $n$ one can construct an $n$-periodic BiQF by using a frieze on an annulus $A_{n,n}$. Our next aim is to prove that every  $n$-periodic BiQF can be obtained in this way.
To prove this, we will need the following technical lemma (which we formulate in greater generality).

\begin{lemma}
\label{periph}  
Let $A_{m,n}$ be an annulus and $F$ be a frieze on  $A_{m,n}$ (here we do {\em not} assume $F$ to be integral or positive).
 If $F(\gamma)\in \Z$ for every bridging arc $\gamma$ on  $A_{m,n}$,   then $F$ is integral, i.e. $F(\gamma)\in \Z$ for all arcs on  $A_{m,n}$.
\end{lemma}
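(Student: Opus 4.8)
The plan is to show that every peripheral arc can be reached from bridging arcs by a finite sequence of Ptolemy relations, each of which expresses the label of a new arc as a Laurent-type expression with denominator equal to the label of an already-known bridging arc — but since all bridging labels are integers and, crucially, the label $1$ occurs on boundary arcs, we can in fact arrange the relations so that every division is by a label known to be $1$. Concretely, I would first fix a triangulation $T$ of $A_{m,n}$ consisting entirely of bridging arcs and boundary arcs (such a triangulation exists: take a "fan" of bridging arcs, e.g. the arcs $P_iQ_i$ and $P_iQ_{i+1}$ as in Example~\ref{ex-n}, possibly after relabelling). Since $F$ restricted to this triangulation takes integer values on the bridging arcs and the value $1$ on boundary arcs, and by Remark~\ref{rem-cl} (via \cite[Theorem 3.1]{P}) a frieze is determined by its values on any triangulation, $F$ is the unique frieze extending these data.

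Next I would take an arbitrary peripheral arc $\gamma$, say $\gamma = Q_jQ_k$ with both endpoints on the same boundary component, and argue by induction on the number of boundary segments cut off by $\gamma$ (equivalently, on the "combinatorial length" $k-j$). If $\gamma$ cuts off a single boundary edge $Q_jQ_{j+1}$ then $\gamma$ together with that edge bounds a disc; triangulating that disc and peeling it off, one reduces to a polygon situation where Conway--Coxeter applies — but more simply, one can exhibit a quadrilateral $P_i Q_j Q_{j+1} \cdots$ whose other three sides are bridging or boundary arcs, so the Ptolemy relation gives $F(\gamma)\cdot F(\text{boundary edge}) = (\text{integer}) + (\text{integer})$, i.e. $F(\gamma)\in\Z$ because $F(\text{boundary edge})=1$. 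For the inductive step, a longer peripheral arc $Q_jQ_k$ forms a quadrilateral with two shorter peripheral arcs $Q_jQ_l$, $Q_lQ_k$ and one bridging arc; the Ptolemy relation again has the boundary/bridging data on the "denominator side", and the inductive hypothesis supplies integrality of the shorter peripheral arcs. The same argument handles peripheral arcs on the other boundary component, and peripheral arcs that wrap around the annulus are reduced to the non-wrapping case by cutting along one bridging arc of $T$.

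The main obstacle I anticipate is bookkeeping rather than conceptual: one must verify that for each peripheral arc one really can find a quadrilateral whose Ptolemy relation isolates $F(\gamma)$ against a factor that is either a boundary edge (hence $1$) or an arc already shown to be an integer, and that this can be done uniformly for all peripheral arcs on an annulus (including those isotopic to a boundary component, which cut off an annular — not disc — region, and arcs that go "the long way around"). Organising the induction by a suitable length function on arcs — e.g. the number of arcs of $T$ that $\gamma$ crosses, together with the number of boundary segments enclosed — and checking that every arc has such a length strictly decomposable into smaller ones via a Ptolemy quadrilateral, is the technical heart of the proof. Once that combinatorial reduction is set up, integrality propagates automatically because division by $1$ is harmless and the set of integers is closed under the remaining operations.
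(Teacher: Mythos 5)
Your inductive step has a genuine gap. In the quadrilateral you describe for a longer peripheral arc $Q_jQ_k$ (vertices $Q_j,Q_l,Q_k$ and one further point, with one bridging side), the arc $Q_jQ_k$ sits as a diagonal, and its Ptolemy partner -- the other diagonal -- is a bridging arc, not a boundary arc. The relation then only gives $F(Q_jQ_k)\cdot F(\beta)=(\hbox{integer})$ with $\beta$ bridging, and since $F(\beta)$ is merely known to be an integer (not $1$; in this lemma it is not even assumed positive), you cannot divide and conclude $F(Q_jQ_k)\in\Z$. So the promise made at the start of your plan -- that every division can be arranged to be by a label equal to $1$ -- is not actually delivered by the quadrilaterals you exhibit, and the induction does not close. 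The side remark that the disc cut off by $\gamma$ reduces to a Conway--Coxeter polygon does not repair this either: the restriction of $F$ to that disc is not a polygon frieze, because one of its boundary values is $F(\gamma)$ itself, the very unknown.

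The configuration you gesture at in your base case is, however, the entire proof, and no induction or length bookkeeping is needed. Given an arbitrary peripheral arc, say $P_iP_k$, take a boundary segment $Q_1Q_2$ on the \emph{other} boundary component (with $Q_1=Q_2$ allowed if that component has a single marked point) and form the quadrilateral $P_iP_kQ_2Q_1$, working on the universal cover if convenient. Then $P_iP_k$ and $Q_1Q_2$ are opposite sides, while the other two sides and both diagonals are bridging arcs, so the Ptolemy relation gives
$$
F(P_iP_k)=F(P_iP_k)\cdot F(Q_1Q_2)=F(P_iQ_2)\cdot F(P_kQ_1)-F(P_kQ_2)\cdot F(P_iQ_1)\in\Z,
$$
with no division at all. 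This single step applies uniformly to every peripheral arc, regardless of how many marked points it cuts off, which also removes the complications you anticipated about arcs going ``the long way around'' (a simple arc on the annulus with both endpoints on one boundary always cuts off a disc, so there is no extra winding to handle). This is exactly the paper's proof of the lemma.
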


\begin{proof}
It is sufficient to show that $F(\gamma)$ is integer for every peripheral arc $\gamma$.
Let  $P_iP_k$ be a peripheral arc. Consider a  quadrilateral $P_iP_kQ_2Q_1$, where $Q_1Q_2$ is a boundary segment ($Q_1$ may coincide with $Q_2$ if the corresponding boundary component contains one marked point only), see Fig~\ref{fig-periph}(a) for the picture on the universal cover. By Ptolemy relation in this quadrilateral we have $$F(P_iP_k)\cdot 1 = F(P_iP_k)\cdot F(Q_1Q_2)=  F(P_iQ_2 )\cdot F(P_kQ_1) - F(P_kQ_2 )\cdot F(Q_1P_i)\in \Z. $$

\end{proof}  

\begin{figure}[!h]
  \epsfig{file=./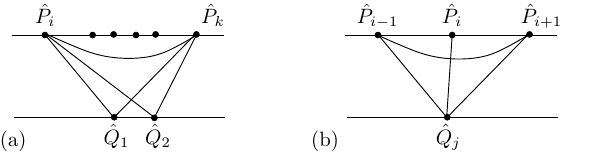,width=0.67\linewidth}  
\caption{Computing the values of the frieze on peripheral arcs from the values on bridging arcs. }
\label{fig-periph}

\end{figure}

\begin{theorem}
\label{thm-n}  
There is a bijection between  positive integral $n$-periodic BiQFs and positive integral friezes on $A_{n,n}$.

\end{theorem}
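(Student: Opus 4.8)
The plan is to construct the bijection explicitly in both directions and check they are mutually inverse. In one direction, Example~\ref{ex-n} already provides a map $\Phi$ sending a positive integral frieze $F$ on $A_{n,n}$ to the $n$-periodic BiQF with entries $u_{i,j}=\hat F(\hat P_i\hat Q_j)$; the quasi-diamond rule and $n$-periodicity were verified there, and positivity and integrality of the $u_{i,j}$ are immediate since $F$ takes positive integer values. So the real content is to produce the inverse map $\Psi$, taking an arbitrary positive integral $n$-periodic BiQF $(u_{i,j})$ and recovering a frieze on $A_{n,n}$.

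The first step in building $\Psi$ is to read off a candidate triangulation and its frieze values. Following the remark after Example~\ref{ex-n}, two consecutive rows $u_{i,i}$ and $u_{i,i+1}$ of the BiQF should be the values of $F$ on the arcs $P_iQ_i$ and $P_iQ_{i+1}$ of the ``standard'' triangulation $T$ of $A_{n,n}$; the pairs of adjacent entries in these rows correspond to the triangles of $T$ (third side a boundary arc). Thus I would define $F$ on the arcs of $T$ by $F(P_iQ_i):=u_{i,i}$, $F(P_iQ_{i+1}):=u_{i,i+1}$ (indices mod $n$; well-defined by $n$-periodicity), and $F(\delta):=1$ on boundary arcs. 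By Remark~\ref{rem-cl} (the Fock--Goncharov/cluster-algebra fact cited from~\cite{P}, Theorem 3.1), assigning positive values on a triangulation extends uniquely to a positive frieze $F$ on all of $A_{n,n}$ via iterated Ptolemy relations. It remains to check two things: (i) this $F$ is integral, and (ii) applying $\Phi$ to $F$ returns the original BiQF, i.e. $\hat F(\hat P_i\hat Q_j)=u_{i,j}$ for all $i,j$.

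For (ii), I would argue by induction ``outward'' from the two starting rows: the entry $u_{i,j}$ with, say, $j>i+1$ sits in a small diamond together with $u_{i,j-1}$, $u_{i+1,j-1}$, $u_{i+1,j}$, satisfying $u_{i,j}u_{i+1,j-1}-u_{i,j-1}u_{i+1,j}=-1$, while the corresponding bridging arcs $\hat P_i\hat Q_j$, etc., form a quadrilateral $\hat P_i\hat P_{i+1}\hat Q_{j}\hat Q_{j-1}$ whose Ptolemy relation reads $\hat F(\hat P_i\hat Q_j)\hat F(\hat P_{i+1}\hat Q_{j-1})=\hat F(\hat P_i\hat Q_{j-1})\hat F(\hat P_{i+1}\hat Q_j)+1$ (using $\hat F$ on boundary arcs equal to $1$). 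Since the quasi-diamond and Ptolemy recursions have the same shape and agree on the initial data, induction gives $\hat F(\hat P_i\hat Q_j)=u_{i,j}$ everywhere; the analogous induction handles $j<i$, and $n$-periodicity of $u$ matches the deck-transformation invariance of $\hat F$. For (i), once (ii) is known, every bridging arc of $A_{n,n}$ has $\hat F$-value equal to some $u_{i,j}\in\Z$, so Lemma~\ref{periph} upgrades this to full integrality of $F$. Finally, $\Phi\circ\Psi=\mathrm{id}$ is exactly statement (ii), and $\Psi\circ\Phi=\mathrm{id}$ holds because $\Phi(F)$ has the rows $u_{i,i}=F(P_iQ_i)$, $u_{i,i+1}=F(P_iQ_{i+1})$ by construction, so $\Psi(\Phi(F))$ reconstructs $F$ on $T$ and hence (by uniqueness of the positive extension) everywhere.

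The main obstacle I anticipate is bookkeeping rather than conceptual: making sure the ``standard'' triangulation $T$ of $A_{n,n}$ is consistent with the index conventions of Fig.~\ref{fig-u}, that the lift $\hat T$ and the universal cover are set up so the small diamonds of the BiQF are in exact correspondence with the boundary-edged quadrilaterals, and that the sign in the quasi-diamond rule ($-1$, not $+1$) is precisely what is needed for the Ptolemy recursion to propagate. One should also confirm that the BiQF has no ``extra'' freedom — i.e. that specifying two consecutive rows $u_{i,i}, u_{i,i+1}$ determines the whole $n$-periodic BiQF — which follows from the quasi-diamond rule run in both directions, and that these two rows, together with the triangles they encode, satisfy whatever positivity is needed for the cluster-algebraic extension statement to apply (here automatic, as all $u_{i,j}>0$).
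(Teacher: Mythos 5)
Your proposal is correct and follows essentially the same route as the paper's proof: define $F$ on the standard triangulation $T$ from the two rows $u_{i,i}$, $u_{i,i+1}$, extend by Ptolemy relations using the cluster-algebra fact in Remark~\ref{rem-cl}, identify the bridging-arc values with the BiQF entries via the matching quasi-diamond/Ptolemy recursions, and invoke Lemma~\ref{periph} for integrality on peripheral arcs before checking the two maps are mutually inverse. The only difference is that you spell out the outward induction for $\hat F(\hat P_i\hat Q_j)=u_{i,j}$, which the paper leaves as a one-line observation.
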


\begin{proof}
  Given a frieze on the annulus  $A_{n,n}$, we consider a triangulation as in Example~\ref{ex-n} and construct an $n$-periodic BiQF by applying Equation~(\ref{u=F}).
  We will now provide the inverse construction.
  
  Consider a positive integral $n$-periodic BiQF with elements $u_{i,j}$, take the elements in two rows $u_{i,i}$ and $u_{i,i+1}$, where $i\in\Z$.
  Consider an annulus $A_{n,n}$ with a triangulation $T$ as in Example~\ref{ex-n}. Define $F(P_iQ_i)=u_{i,i}$ and $F(P_iQ_{i+1})=u_{i,i+1}$. Define also $F(P_iP_{i+1})=F(Q_iQ_{i+1})=1$. Applying Ptolemy relations starting from the triangulation $T$ and flipping the arcs, we will obtain some value $F(\gamma)$ for every arc $\gamma$ on $A_{n,n}$ (and the result will not depend on  the sequence of flips bringing to a given arc, see Remark~\ref{rem-cl}).
  
    Notice that  due to the quasi-diamond rule in BiQF we will have $F(P_iQ_j)=u_{i,j}$. So, we will get integers on all bridging arcs of the annulus. By Lemma~\ref{periph} we conclude that $F(\gamma)\in \Z$  also for all peripheral arcs on the annulus, which (in view of Remark~\ref{rem-cl}) implies that $F$ is a positive integral frieze on   $A_{n,n}$.

Observe that starting from a different pair of rows of a BiQF leads to the same frieze, so the map is well defined. The map is clearly inverse to the one described in Example~\ref{ex-n}, which implies that these maps are bijections.

\end{proof}

\begin{cor}
There is a bijection between positive integral $n$-periodic BiQFs and triangulations of $A_{n,n}$.
  
\end{cor}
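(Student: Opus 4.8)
The plan is to chain together two bijections that are already available. By Theorem~\ref{thm-n} there is a bijection between positive integral $n$-periodic BiQFs and positive integral friezes on the annulus $A_{n,n}$. By the result of Gunawan and Schiffler~\cite{GS} recalled in Section~\ref{fr-on-surf}, every positive integral frieze on an annulus is unitary, so there is a bijection between positive integral friezes on $A_{n,n}$ and triangulations of $A_{n,n}$. Composing these two bijections yields the claim.

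More explicitly, the forward map sends an $n$-periodic BiQF to the frieze $F$ on $A_{n,n}$ produced in the proof of Theorem~\ref{thm-n}, and then to the unique unitary triangulation $T$ of $F$, i.e.\ the triangulation with $F(\gamma)=1$ for all $\gamma\in T$ (uniqueness was noted in Section~\ref{fr-on-surf}, since two arcs labelled $1$ cannot cross). The inverse map sends a triangulation $T$ of $A_{n,n}$ to the frieze on $A_{n,n}$ obtained by assigning $1$ to every arc of $T$ and extending via Ptolemy relations — which is well defined and positive integral by Remark~\ref{rem-cl} together with~\cite[Theorem 3.1]{P} — and then to the associated $n$-periodic BiQF via Equation~\eqref{u=F}. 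That both composites are identities is immediate from the fact that each of the two constituent maps is a bijection.

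Since both halves are cited or already proved, there is essentially no obstacle here; the only point requiring a word of care is the invocation of~\cite{GS}: one must note that the friezes appearing in Theorem~\ref{thm-n} are exactly all positive integral friezes on $A_{n,n}$ (nothing more and nothing less), which is precisely the content of Theorem~\ref{thm-n}, so the Gunawan--Schiffler bijection applies verbatim to this class. Hence the corollary follows by transitivity of bijections.
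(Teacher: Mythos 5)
Your proposal is correct and follows essentially the same route as the paper: compose the bijection of Theorem~\ref{thm-n} with the Gunawan--Schiffler unitarity result~\cite{GS}, which identifies positive integral friezes on $A_{n,n}$ with triangulations of $A_{n,n}$. The extra detail you supply about the explicit forward and inverse maps is fine but not needed beyond what the paper records.
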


\begin{proof}
The statement follows from Theorem~\ref{thm-n} combined with the result of~\cite{GS} stating that every frieze on an annulus is unitary (and hence friezes on the annulus are in bijection with triangulations of the annulus).

\end{proof}

\begin{remark}
Positive integral $n$-periodic BiQFs considered up to any shift $(i,j)\to (i+k,j+l)$ of the whole grid are in bijection with triangulations of $A_{n,n}$ considered up to cyclic relabelling of the points on the two boundaries.

\end{remark}

\begin{remark}
\label{rem-mob}
  If a BiQF in addition to $n$-periodicity admits a glide symmetry  (see Fig.~\ref{fig-fib} for  examples) then the frieze on corresponding annulus admits an additional symmetry, which can be understood as a symmetry of the surface. Taking a quotient we obtain a frieze on a M\"obius band $M_n$.
More generally, positive integral $n$-periodic BiQFs with glide symmetry are in bijection with friezes on $M_n$ (the entries of the BiQF correspond to arcs and self-intersecting curves on $M_n$ passing through the cross-cap), and thus are naturally linked with quasi-cluster algebras introduced in~\cite{DP}.

\end{remark}

\section{$(m,n)$-periodic bi-infinite quasi-friezes}
\label{sec mn}
In this section we consider BiQFs with translational symmetry  $u(i,j)=u(i+m,j+n)$, we will call it $(m,n)$-periodic BiQF. See Fig.~\ref{fig-mn} for an example of a $(3,2)$-periodic BiQF.  
In particular, $n$-periodic BiQFs from Section~\ref{sec n} are $(n,n)$-periodic BiQFs.

\begin{figure}[!h]
 \epsfig{file=./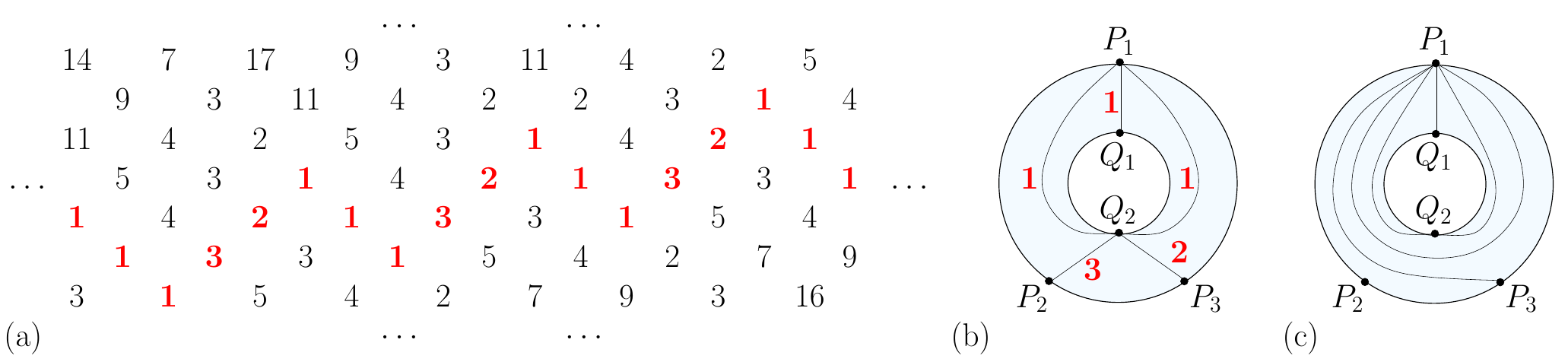,width=0.95\linewidth}  
\caption{Example of a $(3,2)$-periodic BiQF (see Example~\ref{ex-mn}): (a) BiQF; (b) the corresponding frieze on $A_{3,2}$ (the triangulation corresponding to red entries in (a)); (c) the corresponding unitary triangulation.}
\label{fig-mn}
\end{figure}

\begin{example}
\label{ex-mn}  
  Consider an annulus $A_{m,n}$  with $m$ marked points $P_1,\dots,P_m$ on one boundary component and $n$ marked points $Q_1, \dots, Q_n$ on the other. Consider also an infinite strip giving the universal cover of $A_{m,n}$ with marked points $\hat P_i$, $\hat Q_j$, $i,j\in\Z$.
Consider any positive integral frieze $F$ on $A_{m,n}$ and define
\begin{equation}\label{eq mn} 
  u_{i,j}=\hat F(\hat P_i \hat Q_j),
\end{equation}
where $\hat F$ is the lift of $F$ to the universal cover constructed as in Example~\ref{ex-n}.

The numbers $u_{i,j}$ arranged as in Fig.~\ref{fig-u}(b) form a BiQF (the Ptolemy relations on the annulus guarantee that the quasi-diamond rule will hold in the grid). The constructed BiQF is $(m,n)$-periodic, as $\hat P_{i+m} \hat Q_{j+n}$  and $\hat P_i \hat Q_j$ are two lifts of the same arc on $A_{m,n}$.
  
Fig.~\ref{fig-mn} provides an example of a  $(3,2)$-periodic BiQF associated to a frieze on $A_{3,2}$.
  
\end{example}

\begin{theorem}
  \label{thm-mn}
There is a bijection between positive integral $(m,n)$-periodic BiQFs and positive integral friezes on the annulus $A_{m,n}$.
  
\end{theorem}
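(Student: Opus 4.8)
The plan is to mimic the structure of the proof of Theorem~\ref{thm-n}, replacing the $(n,n)$-periodic setup by the general $(m,n)$-periodic one. One direction is already supplied by Example~\ref{ex-mn}: a positive integral frieze $F$ on $A_{m,n}$ produces, via $u_{i,j}=\hat F(\hat P_i\hat Q_j)$, a BiQF which is $(m,n)$-periodic because $\hat P_{i+m}\hat Q_{j+n}$ and $\hat P_i\hat Q_j$ are lifts of the same arc, and it is positive integral since $\hat F$ is. So the real content is the inverse construction: given a positive integral $(m,n)$-periodic BiQF $(u_{i,j})$, build a positive integral frieze on $A_{m,n}$ from it.

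First I would fix a triangulation $T$ of $A_{m,n}$ by bridging arcs together with the corresponding lift $\hat T$ of $\hat T$ to the universal-cover strip, chosen so that the arcs of $\hat T$ are exactly the arcs $\hat P_i\hat Q_j$ for those pairs $(i,j)$ forming a ``staircase'' path in the grid — concretely, a lattice path from some position to its $(m,n)$-translate, alternating horizontal and vertical steps, whose entries $u_{i,j}$ we take as the initial frieze values $F(\gamma)$ on the arcs $\gamma\in T$; boundary arcs get value $1$. Because $A_{m,n}$ has $m+n$ marked points, such a triangulation by bridging arcs exists and has $m+n$ arcs, matching the length of one period of the staircase. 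Then, starting from these values, I repeatedly apply Ptolemy relations (flips) to assign a value $F(\gamma)$ to every arc $\gamma$ of $A_{m,n}$; by Remark~\ref{rem-cl} (the cluster-algebra interpretation, so the result is flip-independent) this is well defined, and by~\cite[Theorem 3.1]{P} positivity on the initial triangulation forces positivity everywhere. The key compatibility check is that the quasi-diamond rule in the BiQF is exactly the Ptolemy relation in the quadrilaterals $\hat P_i\hat P_{i+1}\hat Q_{j+1}\hat Q_j$ (whose two opposite sides are boundary arcs of value $1$), so an induction on flips starting from the staircase yields $F(\hat P_i\hat Q_j)=u_{i,j}$ for \emph{all} $i,j$. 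Hence $F$ is integral on all bridging arcs, and Lemma~\ref{periph} upgrades this to integrality on peripheral arcs too; thus $F$ is a positive integral frieze on $A_{m,n}$.

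Finally I would check that the two constructions are mutually inverse: starting from $F$, forming the BiQF, and running the inverse construction returns $F$ (since the staircase values determine the frieze by flips, and they are recovered correctly); starting from a BiQF, forming $F$, and reading off $\hat F(\hat P_i\hat Q_j)$ returns the original $u_{i,j}$ by the identification just established. I would also note that the construction does not depend on the choice of staircase (equivalently, of initial triangulation $T$), since different choices are related by flips and the frieze is flip-independent — this is the ``well defined'' remark made at the end of the proof of Theorem~\ref{thm-n}.

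The main obstacle, compared to the $n$-periodic case, is purely combinatorial-geometric: verifying that a ``staircase'' lattice path from $(i_0,j_0)$ to $(i_0+m,j_0+n)$ really does descend to a triangulation of $A_{m,n}$ by $m+n$ bridging arcs (one must see that consecutive arcs $\hat P_i\hat Q_j$ and $\hat P_{i+1}\hat Q_j$, or $\hat P_i\hat Q_j$ and $\hat P_i\hat Q_{j+1}$, cut out triangles with a boundary side, and that the $(m,n)$-periodicity makes the strip picture close up to the annulus correctly — in particular that the two endpoints of the path are lifts of the same arc). Once that picture is in place, everything else is a routine transport of the $n$-periodic argument: the quasi-diamond $\leftrightarrow$ Ptolemy translation, the flip-independence from Remark~\ref{rem-cl}, the positivity propagation from~\cite{P}, and the integrality of peripheral arcs from Lemma~\ref{periph}.
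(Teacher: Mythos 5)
Your proposal is correct and follows essentially the same route as the paper: one direction via Example~\ref{ex-mn}, the inverse by seeding a bridging triangulation of $A_{m,n}$ coming from a lattice path of entries (the paper uses the specific two-fan path $u_{i,j},\dots,u_{i+m,j},\dots,u_{i+m,j+n-1}$), then propagating by Ptolemy/flips, identifying quasi-diamonds with Ptolemy relations, and invoking Lemma~\ref{periph}, Remark~\ref{rem-cl} and positivity as in Theorem~\ref{thm-n}. The only (harmless) difference is that you allow an arbitrary lattice-path triangulation rather than the paper's two-fan choice.
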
 

\begin{proof}
  The proof is very similar to the one of Theorem~\ref{thm-n}. The map from friezes on $A_{m,n}$ to $(m,n)$-periodic BiQFs is constructed in Example~\ref{ex-mn}.
To construct the inverse map consider the elements $u_{i,j}, u_{i+1,j},\dots, u_{i+m,j}, u_{i+m,j+1}, \dots, u_{i+m,j+n-1}$ of an $(m,n)$-periodic BiQFs. These elements correspond to a triangulation $T$ of the annulus $A_{m,n}$ containing exactly two fans of sizes $m$ and $n$ and having no peripheral arcs. Assigning the arcs of $T$ with the listed  entries of the BiQF
 and computing other arcs we will get a positive integral frieze on $A_{m,n}$. As before, this provides an inverse map to the one described in  Example~\ref{ex-mn}.

\end{proof}

\section{$SL_2$-tilings with translational symmetries}
\label{sec-sl2-transl}
Consider a BiQF rotated by $45^\circ$ counterclockwise, we obtain an $SL_2$-tiling with the entries $u_{i,j}$, $i$ growing from the bottom to the top, $j$ growing from the left  to the right (e.g., rotating the BiQF shown in Fig.~\ref{fig-A22} one gets the $SL_2$-tiling shown in Fig.~\ref{u-sl}).

The results of Sections~\ref{sec n} and~\ref{sec mn} can be reformulated as follows: positive integral $SL_2$-tilings invariant under a translational symmetry $u_{i,j}=u_{i+m,j+n}$ with $mn>0$ are in bijection with friezes on annulus $A_{m,n}$. The aim of this section is to show that these exhaust all possible translational symmetries of positive integral $SL_2$-tilings, i.e. translational symmetry  $u_{i,j}=u_{i+m,j+n}$ with $mn\le 0$ never occurs. Our main tool for that will be Theorem~\ref{thm-sh}.

To show that there are no positive integral $(m,n)$-periodic $SL_2$-tilings with $mn<0$, we need the following lemma. By an \emph{arc} we mean a hyperbolic line (or an arc of a circle orthogonal to the boundary of the disc or half-plane). 

\begin{lemma}
\label{N}
  Let $\frac{a_1}{a_2},\frac{c_1}{c_2} \in \Q$ be two irreducible fractions. Let $N=N(\frac{a_1}{a_2},\frac{c_1}{c_2})$ be the number of edges of the Farey graph $\mathcal F$ intersected by the arc connecting $\frac{a_1}{a_2}$ with $\frac{c_1}{c_2}$. Then $|\det(\frac{a_1}{a_2},\frac{c_1}{c_2}))| \ge N$.

\end{lemma}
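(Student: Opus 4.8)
\textbf{Proof plan for Lemma~\ref{N}.}

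The plan is to induct on $N = N(\frac{a_1}{a_2},\frac{c_1}{c_2})$, the number of Farey edges crossed by the arc $\alpha$ joining $\frac{a_1}{a_2}$ to $\frac{c_1}{c_2}$. The base case $N = 0$ means the two fractions are either equal or joined by a Farey edge, so $|\det(\frac{a_1}{a_2},\frac{c_1}{c_2})| \in \{0,1\} \ge 0$, with the value being $\ge 1 = N+1$ when they are distinct; in fact it is cleaner to prove the slightly sharper statement $|\det| \ge N$ with equality analysis, or even to track that $|\det| \ge N+1$ whenever the two vertices are distinct and aim directly for $\ge N$ as stated. For the inductive step, consider the first Farey triangle $\Delta_0$ that $\alpha$ enters after leaving the vertex $\frac{a_1}{a_2}$ (every vertex of $\mathcal F$ lies in infinitely many triangles, and $\alpha$ must pass through a well-defined sequence of them). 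The arc $\alpha$ exits $\Delta_0$ through one of its two edges not incident to $\frac{a_1}{a_2}$; these two edges share a vertex, call it $\frac{b_1}{b_2}$, the "apex" of the fan seen from $\frac{a_1}{a_2}$. Replacing the initial segment of $\alpha$ by the Farey edge from $\frac{a_1}{a_2}$ to $\frac{b_1}{b_2}$, the remaining arc from $\frac{b_1}{b_2}$ to $\frac{c_1}{c_2}$ crosses exactly $N-1$ edges (we cross one fewer, namely the edge $\alpha$ used to exit $\Delta_0$).

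The key identity to exploit is the Farey/Ptolemy determinant relation of Proposition~\ref{det}, or more simply the Plücker-type relation among three vertices of a common Farey triangle. Write $\frac{b_1}{b_2}$ for the apex vertex just above; the two fractions $\frac{a_1}{a_2}$ and $\frac{b_1}{b_2}$ are Farey neighbours, so $|\det(\frac{a_1}{a_2},\frac{b_1}{b_2})| = 1$. The mediant-type additivity gives, for the appropriate choice of signs/representatives, $\det(\frac{a_1}{a_2},\frac{c_1}{c_2}) = \pm\det(\frac{b_1}{b_2},\frac{c_1}{c_2}) \pm \det(\text{something of absolute value} \le \dots)$; the honest way to get the inequality is: since $\frac{c_1}{c_2}$ lies on one definite side, the vector $(c_1,c_2)$ can be written as a \emph{non-negative} integer combination $(c_1,c_2) = \lambda(a_1,a_2) + \mu(b_1,b_2)$ with $\lambda,\mu \ge 1$ integers (because $\frac{c_1}{c_2}$ lies strictly beyond the edge $\frac{a_1 b_1}{a_2 b_2}$ of $\Delta_0$ in the Stern–Brocot / continued-fraction sense). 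Then
$$
|\det(\tfrac{a_1}{a_2},\tfrac{c_1}{c_2})| = \bigl|\lambda \det(\tfrac{a_1}{a_2},\tfrac{a_1}{a_2}) + \mu\det(\tfrac{a_1}{a_2},\tfrac{b_1}{b_2})\bigr| = \mu \ge 1,
$$
and similarly $|\det(\tfrac{b_1}{b_2},\tfrac{c_1}{c_2})| = \lambda$, while the determinant across the exit edge satisfies $|\det(\tfrac{a_1 b_1}{a_2 b_2},\tfrac{c_1}{c_2})|$-type quantity relates $\lambda$ and $\mu$. Combining: $|\det(\frac{a_1}{a_2},\frac{c_1}{c_2})| = |\det(\frac{a_1}{a_2},\frac{b_1}{b_2})\cdot(\text{coeff})| \ge |\det(\frac{b_1}{b_2},\frac{c_1}{c_2})| + 1$ once one checks $\mu \ge \lambda + 1$ fails in general — so instead the right bookkeeping is $|\det(\frac{a_1}{a_2},\frac{c_1}{c_2})| + |\det(\frac{b_1}{b_2},\frac{c_1}{c_2})|$ is what grows, or one simply observes $\mu = |\det(\frac{a_1}{a_2},\frac{c_1}{c_2})| \ge |\det(\frac{b_1}{b_2},\frac{c_1}{c_2})| + 1 = \lambda'+1$ where $\lambda'$ counts against $N-1$. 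By induction $|\det(\frac{b_1}{b_2},\frac{c_1}{c_2})| \ge N-1$, hence $|\det(\frac{a_1}{a_2},\frac{c_1}{c_2})| \ge N$.

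The main obstacle I anticipate is pinning down the sign/positivity bookkeeping cleanly: asserting that $(c_1,c_2)$ is a genuinely \emph{positive} combination of the two basis vectors $(a_1,a_2),(b_1,b_2)$ requires choosing consistent orientations of the reduced fractions (lifting each to a primitive vector in a half-plane so that consecutive Farey neighbours form a positively oriented basis of $\mathbb{Z}^2$), and then verifying that "crossing one more Farey edge" corresponds exactly to one of $\lambda,\mu$ strictly increasing. This is essentially the statement that the Farey triangulation is the Stern–Brocot tree and that the determinant $|\det(v,w)|$ of a primitive vector $w$ against a fixed primitive $v$ equals $1$ plus the number of Farey edges emanating "between" them — standard but fiddly. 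An alternative, possibly cleaner route avoiding explicit coordinates: use the transitive $SL_2(\mathbb{Z})$-action on Farey edges to normalise $\frac{a_1}{a_2} = \frac{0}{1}$ (i.e.\ $0$) and $\frac{b_1}{b_2} = \frac{1}{0}$ (i.e.\ $\infty$), after which $|\det(\frac{0}{1},\frac{p}{q})| = |p|$, the triangles containing the vertex $0$ are visibly $\{0,\frac{1}{k},\frac{1}{k+1}\}$ and $\{0,\frac{-1}{k},\frac{-1}{k+1}\}$, and crossing edges of the fan at $0$ literally increments $|p|$ by one each time; the induction then runs transparently along the fan, then transfers to the neighbour and recurses.
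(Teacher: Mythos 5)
Your overall strategy is the same as the paper's, just run from the other end of the arc: induct on $N$, peel off the first (rather than the last) crossed edge, and gain $+1$ from a leftover determinant. However, as written the argument has two genuine gaps. First, the crossing count. You assert that after replacing $\frac{a_1}{a_2}$ by a vertex $\frac{b_1}{b_2}$ of the first triangle, ``the remaining arc from $\frac{b_1}{b_2}$ to $\frac{c_1}{c_2}$ crosses exactly $N-1$ edges (we cross one fewer, namely the edge used to exit $\Delta_0$).'' This is not automatic and depends on which endpoint of the first crossed edge you take: if several of the initial crossed edges share a vertex (the actual apex of the first fan of crossed edges), then the arc from that apex to $\frac{c_1}{c_2}$ drops \emph{all} of those crossings, not just one, and the induction only yields $|\det| \ge N-k+1$ for $k\ge 2$, which is too weak. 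Your description of $\frac{b_1}{b_2}$ as ``the apex of the fan seen from $\frac{a_1}{a_2}$'' points at precisely this wrong vertex; the correct choice is the endpoint of the first crossed edge \emph{not} shared with the second crossed edge, and even then one must argue (e.g.\ via the fact that the crossed edges incident to a fixed vertex occur consecutively, or a separation argument) that the arc from it to $\frac{c_1}{c_2}$ still meets at least $N-1$ edges, so that the inductive hypothesis gives $|\det(\frac{b_1}{b_2},\frac{c_1}{c_2})|\ge N-1$. The paper makes exactly this choice (at the last crossed edge) and uses it.

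Second, the inequality that actually drives the induction, $|\det(\frac{a_1}{a_2},\frac{c_1}{c_2})| \ge |\det(\frac{b_1}{b_2},\frac{c_1}{c_2})| + 1$, is never derived: you first propose $\mu\ge\lambda+1$, concede it ``fails in general,'' and then assert essentially the same inequality in different notation. The clean way to get it --- and the way the paper does --- is Proposition~\ref{det} applied to $\frac{a_1}{a_2}$, $\frac{c_1}{c_2}$ and the two endpoints $\frac{b_1}{b_2},\frac{d_1}{d_2}$ of the peeled-off Farey edge: since that edge and the two sides of $\Delta_0$ through $\frac{a_1}{a_2}$ all have determinant $\pm 1$, and all terms are positive by the cyclic ordering, one gets $|\det(\frac{a_1}{a_2},\frac{c_1}{c_2})| = |\det(\frac{b_1}{b_2},\frac{c_1}{c_2})| + |\det(\frac{d_1}{d_2},\frac{c_1}{c_2})| \ge (N-1)+1$ (in your coordinates this is exactly the missing verification that $\mu=\lambda+|\mu-\lambda|$ with $\mu\ne\lambda$, i.e.\ $\mu\ge\lambda+1$ does hold in this configuration). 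With that identity and the correct choice of vertex your induction closes and coincides with the paper's proof; as written, it does not. The closing ``normalise to $0$ and $\infty$'' alternative is likewise only a sketch: the arc emanating from $0$ crosses no edge of the fan at $0$, so the claim that each crossing increments $|p|$ by one is again the statement to be proved, not an observation.
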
  

\begin{proof}
We prove the statement by 
induction on the number of intersections $N$. When $N=0$ the statement is trivial. Given the statement for all arcs with less than $N$  intersections, we consider an arc $\gamma$ between $a_1/a_2$ and $c_1/c_2$ intersecting edges of $\mathcal F$ exactly $N$ times. Let  $\alpha\in \mathcal F$   be the last edge of $\mathcal F$ intersected by $\gamma$ (while going along $\gamma$ towards $c_1/c_2$), denote the endpoints of $\alpha$ by $b_1/b_2$ and $d_1/d_2$. In particular, the arcs connecting $c_1/c_2$ with $b_1/b_2$ and $d_1/d_2$ are edges of $\mathcal F$, see Fig.~\ref{fig-pf}(a). Notice also that both arcs connecting  $a_1/a_2$ with $b_1/b_2$ and $d_1/d_2$ intersect edges of $\mathcal F$ less than $N$ times, so we can apply the inductive assumption. Moreover, one of these arcs (in Fig.~\ref{fig-pf}(a), this is the one with endpoint $b_1/b_2$) intersects edges of $\mathcal F$ exactly $N-1$ times. Taking into account that $\alpha\in \mathcal F$ and hence $|\det(b_1/b_2,d_1/d_2)|=1$, and applying the Ptolemy relation, we obtain
$$
\begin{vmatrix} a_1&c_1\\a_2&c_2 \end{vmatrix}\cdot 1=
\begin{vmatrix} a_1&b_1\\a_2&b_2 \end{vmatrix}\cdot\begin{vmatrix} c_1&d_1\\c_2&d_2 \end{vmatrix}+
\begin{vmatrix} a_1&d_1\\a_2&d_2 \end{vmatrix}\cdot\begin{vmatrix} b_1&c_1\\b_2&c_2 \end{vmatrix}
\ge
(N-1) \cdot 1+1\cdot 1= N,
$$  as all terms  here are positive integers.

\end{proof}

\begin{figure}[!h]
 \epsfig{file=./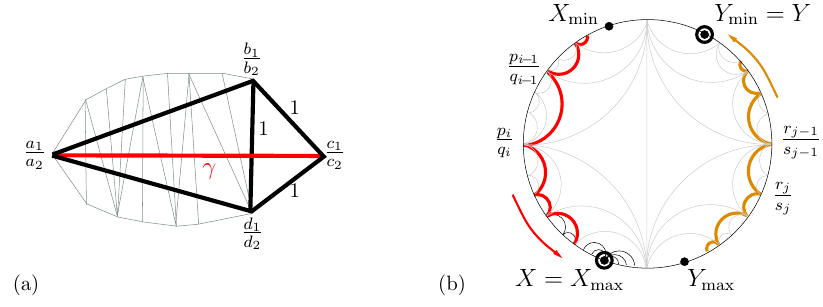,width=0.85\linewidth}  
\caption{(a) To the proof of Lemma~\ref{N};  (b) to the proof of Proposition~\ref{mn<0}.   }
\label{fig-pf}
\end{figure}

\begin{prop} 
\label{mn<0}  
There is no positive integral $SL_2$-tiling $\{u_{i,j}\}$ such that   $u_{i,j}=u_{i+m,j+n}$ for all $i,j\in \Z$, where  $mn< 0$.

\end{prop}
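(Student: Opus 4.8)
The plan is to use Short's correspondence (Theorem~\ref{thm-sh}) to translate the $(m,n)$-periodicity of a positive $SL_2$-tiling into a statement about the pair of paths $((p_i/q_i),(r_j/s_j))$ on the Farey graph, and then derive a contradiction from Lemma~\ref{N} when $mn<0$. Assume such a tiling exists with, say, $m>0$ and $n<0$; write $n=-n'$ with $n'>0$. Since $u_{i,j}=u_{i+m,j+n}$ for all $i,j$, the two paths must be compatible with this symmetry: more precisely, the $SL_2(\Z)$-translate of the pair $((p_{i+m}/q_{i+m}),(r_{j+n}/s_{j+n}))$ equals $((p_i/q_i),(r_j/s_j))$ (using that Short's bijection is into pairs modulo simultaneous $SL_2(\Z)$). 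So there is a matrix $g\in SL_2(\Z)$ with $g\cdot(p_{i+m}/q_{i+m})=p_i/q_i$ and $g\cdot(r_{j+n}/s_{j+n})=r_j/s_j$ for all $i,j$; i.e. $g$ shifts the first path by $m$ steps and simultaneously shifts the second path by $n=-n'$ steps, in the \emph{opposite} direction.

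The key geometric point, which I expect to be the crux, is the following. In the positive case (Theorem~\ref{thm-sh}(b)), the vertices of the two paths lie on the boundary circle in the cyclic order $\dots,p_{i+1}/q_{i+1},p_i/q_i,p_{i-1}/q_{i-1},\dots,r_{j-1}/s_{j-1},r_j/s_j,r_{j+1}/s_{j+1},\dots$ — so the first path accumulates to a limit point $\xi^+$ (as $i\to+\infty$) and the second path accumulates to a limit point, and these two interleaved monotone sequences force the two paths to share the \emph{same} endpoints on the circle: as $i\to+\infty$ and $j\to-\infty$ the vertices approach a common point $\xi$, and as $i\to-\infty$ and $j\to+\infty$ they approach a common point $\eta$. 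Now $g$, being an element of $SL_2(\Z)$, is elliptic, parabolic, or hyperbolic. Because $g$ shifts a genuine bi-infinite path by a nonzero number of steps, $g$ has infinite order, so $g$ is parabolic or hyperbolic, and its fixed point set on the boundary circle must contain the limit points of the path; matching up the two limit points $\xi,\eta$ (which are interchanged-or-fixed by $g$ depending on signs) will pin down whether $g$ is parabolic (one fixed point) or hyperbolic (two fixed points). The sign condition $mn<0$ is exactly what makes the required $g$ impossible: shifting the first path "forward" toward $\xi$ while shifting the second path "forward" toward $\xi$ as well requires $n>0$; asking for $n<0$ would force $g$ to move the second path's vertices \emph{away} from $\xi$, contradicting that $g$ acts on the same Farey graph and the same two limit points with a consistent north–south dynamics.

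Concretely, here is the cleaner route I would actually write up, avoiding limit-set subtleties. Fix $j$ and consider the bi-infinite sequence of fractions $r_j/s_j$ (second path) together with the row entries $u_{i,j}=\det(p_i/q_i,r_j/s_j)$. As $i$ ranges over $\Z$ the vertices $p_i/q_i$ march monotonically along the circle, so the arc from $p_i/q_i$ to $r_j/s_j$ crosses more and more edges of $\mathcal F$ as $|i|$ grows in the appropriate direction; by Lemma~\ref{N}, $u_{i,j}=|\det(p_i/q_i,r_j/s_j)|\ge N(p_i/q_i,r_j/s_j)\to\infty$. Thus each row and each column of the tiling is unbounded, and moreover the entries grow in a controlled monotone fashion away from a "diagonal" region. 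But $(m,n)$-periodicity with $mn<0$ relates the entry $u_{i,j}$ to $u_{i+m,j+n}$ where the two indices move in \emph{opposite} directions: increasing $i$ pushes $p_i/q_i$ one way around the circle while decreasing $j$ (since $n<0$) pushes $r_j/s_j$ the opposite way, so $N(p_{i+m}/q_{i+m},r_{j+n}/s_{j+n})$ strictly increases without bound along the orbit $(i+mt,j+nt)$, $t\to+\infty$, forcing $u_{i+mt,j+nt}\to\infty$ — contradicting $u_{i+mt,j+nt}=u_{i,j}$. The main obstacle is making "pushes the opposite way so $N$ strictly increases" rigorous: I would prove that along a monotone sequence of vertices the Farey-distance to a fixed vertex is eventually strictly monotone, using that a fixed vertex of $\mathcal F$ has only finitely many neighbours on any given side, equivalently that only finitely many vertices of $\mathcal F$ are within bounded Farey distance of a given vertex on a given side of it. The case $m<0,n>0$ is symmetric, and the degenerate cases where one of $m,n$ is $0$ are handled separately (a translation fixing every column, say, would force the corresponding path to be periodic under a nontrivial power of an elliptic-or-parabolic element, again impossible for an honest bi-infinite path — or can be excluded by the same growth argument).
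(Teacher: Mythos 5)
Your overall strategy is the same as the paper's: pass to the pair of paths via Theorem~\ref{thm-sh}, note that $(m,n)$-periodicity with $m>0$, $n<0$ forces $u_{0,0}=u_{mt,nt}=\bigl|\det(p_{mt}/q_{mt},\,r_{nt}/s_{nt})\bigr|$ for all $t>0$, and contradict this via Lemma~\ref{N} by showing the number of Farey edges crossed by the chord from $p_{mt}/q_{mt}$ to $r_{nt}/s_{nt}$ tends to infinity. However, the step you yourself flag as ``the main obstacle'' is exactly where the paper's proof does its real work, and the tool you propose for it is false: a vertex of $\mathcal F$ has \emph{infinitely} many neighbours on each side (e.g.\ $0$ is joined to $1/n$ for every $n\ge 1$), so ``only finitely many neighbours on a given side'' fails, and so does the claimed equivalent ``only finitely many vertices of $\mathcal F$ within bounded Farey distance of a given vertex on a given side'' --- there are already infinitely many vertices at crossing-distance $0$ on each side. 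Moreover, even a correct fixed-vertex statement (for fixed $v$ and distinct vertices $p_i\to X\neq v$ one does have $N(p_i,v)\to\infty$) would not finish the argument, because along the orbit $(i+mt,j+nt)$ \emph{both} endpoints of the chord move; you assert that this makes $N$ increase without bound, but give no mechanism. The paper supplies the mechanism by a case analysis on the limit points $X=\lim_{i\to+\infty}p_i/q_i$ and $Y=\lim_{j\to-\infty}r_j/s_j$: if one of them is rational, the infinitely many Farey edges of its fan, accumulating at it from both sides, eventually separate $p_i/q_i$ from $r_j/s_j$ in arbitrarily large numbers; if both are irrational, the geodesic $XY$ crosses infinitely many Farey edges, each of which is eventually crossed by the chord. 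Either way Lemma~\ref{N} gives $u_{i,j}>u_{0,0}$ far out along the orbit, the desired contradiction. Some such dichotomy (or an equivalent two-moving-endpoints argument) is what your write-up is missing.

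A secondary problem is the ``key geometric point'' of your second paragraph: the claim that positivity forces the two paths to share their endpoints on the circle, with the $i\to+\infty$ limit of one path equal to the $j\to-\infty$ limit of the other, is unjustified and in fact wrong --- the cyclic order in Theorem~\ref{thm-sh}(b) places the four limit points in the cyclic order $Y_{\min},Y_{\max},X_{\max},X_{\min}$, so the pair you want to identify ($X_{\max}$ with $Y_{\min}$) is precisely the pair that cannot coincide, while the coincidences that can occur ($X_{\max}=Y_{\max}$ or $X_{\min}=Y_{\min}$) are the opposite pairing. You discard this route in favour of the ``cleaner'' one, so it does not sink the proposal by itself, but as written that paragraph should not survive into a final proof. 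The first paragraph's observation that the symmetry is realised by some $g\in SL_2(\Z)$ acting on the pair of paths is fine (up to sign) but is never used.
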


\begin{proof}
  Suppose that $\{u_{i,j}\}$ is a positive integral $SL_2$-tiling with  $u_{i,j}=u_{i+m,j+n}$ such that $mn< 0$.
We will assume $m>0$, $n<0$ (the case of $m<0$, $n>0$ is proved similarly). 
  
  Applying Theorem~\ref{thm-sh}(b), we  consider the corresponding two paths $(p_i/q_i)$ and $(r_j/s_j)$ on the Farey graph.  The paths do not intersect and, as in Remark~\ref{rem-paths},  the sequence $(p_i/q_i)$ follows the boundary of the disc counterclockwise while  $(r_j/s_j)$ clockwise. The condition  $u_{i,j}=u_{i+m,j+n}$  implies that   $ \begin{vmatrix} p_{km} & r_{kn} \\ q_{km} & s_{kn} \end{vmatrix}= \begin{vmatrix} p_0 & r_0 \\ q_0 & s_0 \end{vmatrix}=u_{0,0}$ for every positive $k\in\Z$. In particular, $u_{0,0}$ appears infinitely often amongst $u_{i,j}$ as $i\to +\infty$ and $j\to -\infty$. Our goal is to find a contradiction with this assumption.

Let $X_{\min}$ and $X_{\max}$ be the limit points of the path  $(p_i/q_i)$ as $i\to -\infty$ and $+\infty$ respectively, and   $Y_{\min}$ and $Y_{\max}$ be the limit points of $(r_j/s_j)$,  see Fig.~\ref{fig-pf}(b). By Theorem~\ref{thm-sh}, the limit points lie in $\overline \R$ in the following cyclic clockwise order $Y_{\min}, Y_{\max}, X_{\max}, X_{\min}$. Denote $Y=Y_{\min}=\lim\limits_{j\to -\infty}r_j/s_j$,  $X=X_{\max}=\lim\limits_{i\to +\infty}p_i/q_i$.

First, suppose that at least one of $X$ and $Y$, say $X$, is rational. Consider the set of edges of $\mathcal F$ incident to $X$, infinitely many of them are going to the left of $X$ and infinitely many to the right. This means that for every $N$ there is a number $i_0>0$ and $j_0<0$ such that  if $i>i_0$ and $j<j_0$ then the arc connecting $p_i/q_i$ with $r_j/s_j$ intersects more than $N$ edges of $\mathcal F$ (here we use that both sequences approach their limit points counterclockwise). Taking any $N>u_{0,0}$ and applying Lemma~\ref{N} we see that  $\begin{vmatrix} p_i & r_j \\ q_i & s_j \end{vmatrix}>u_{0,0}$ for all $i>i_0$ and  $j<j_0$ which contradicts the periodicity assumption.

Next, suppose that both $X$ and $Y$ are irrational. Then the arc $XY$ intersects infinitely many edges of $\mathcal F$.
Since $X$ and $Y$ are limit points of the considered sequences, for every edge $\gamma\in \mathcal F$ intersecting $XY$ there exist numbers $i_\gamma$ and $j_\gamma$ such that whenever  $i>i_\gamma$ and  $ j<j_\gamma$ the arc connecting $p_i/q_i$ with $r_j/s_j$ intersects $\gamma$. Again, by  Lemma~\ref{N} this implies that by taking any $N>u_{0,0}$ we can find $i_0,j_0$ such that  $ \begin{vmatrix} p_i & r_j \\ q_i & s_j \end{vmatrix}>u_{0,0}$ for all $i>i_0$ and $ j<j_0$.

\end{proof}

Similarly to Proposition~\ref{mn<0}, one can prove the following.

\begin{prop}
\label{m0}  
There is no positive integral $SL_2$-tiling $\{u_{i,j}\}$ such that   $u_{i,j}=u_{i+m,j}$ or  $u_{i,j}=u_{i,j+n}$.

\end{prop}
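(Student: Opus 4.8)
The plan is to follow the proof of Proposition~\ref{mn<0} almost verbatim, the simplification being that now only one of the two paths provided by Theorem~\ref{thm-sh} moves while the other stays fixed. Suppose, towards a contradiction, that $\{u_{i,j}\}$ is a positive integral $SL_2$-tiling with $u_{i,j}=u_{i+m,j}$ for all $i,j$ and some $m\neq 0$; replacing $m$ by $-m$ if necessary, we may take $m\geq 1$. By Theorem~\ref{thm-sh}(b), write $u_{i,j}=\det(p_i/q_i,\,r_j/s_j)$ for a pair of non-intersecting paths $(p_i/q_i)$ and $(r_j/s_j)$ on $\mathcal F$ whose vertices lie on the boundary of the disc in the prescribed cyclic order, and let $X=\lim_{i\to+\infty}p_i/q_i\in\overline\R$. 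Iterating the periodicity gives $u_{0,j}=u_{km,j}=\det(p_{km}/q_{km},\,r_j/s_j)$ for all $k\geq 0$ and all $j$, so for each fixed $j$ the constant $u_{0,j}$ occurs infinitely often among the determinants $\det(p_i/q_i,\,r_j/s_j)$ as $i\to+\infty$.

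Next I would show that $|\det(p_i/q_i,\,r_j/s_j)|\to\infty$ as $i\to+\infty$ whenever $r_j/s_j\neq X$; this contradicts the previous paragraph once we pick an index $j_0$ with $r_{j_0}/s_{j_0}\neq X$, which is possible since $r_0/s_0\neq r_1/s_1$ and so at most one of these two equals $X$. To prove the divergence I would apply Lemma~\ref{N}: it is enough to check that the arc joining $p_i/q_i$ to the fixed vertex $r_{j_0}/s_{j_0}$ crosses arbitrarily many edges of $\mathcal F$ for $i$ large. Here the two cases of the proof of Proposition~\ref{mn<0} reappear. If $X$ is irrational, the geodesic $[X,r_{j_0}/s_{j_0}]$ meets infinitely many edges of $\mathcal F$, and the first $N$ of them, read off starting from $r_{j_0}/s_{j_0}$, cut the disc into a nested chain of regions shrinking to $X$; once $p_i/q_i$ falls into the innermost one, the arc from $p_i/q_i$ to $r_{j_0}/s_{j_0}$ is forced to cross all $N$ of them. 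If $X$ is a rational vertex of $\mathcal F$, then (applying an element of $SL_2(\Z)$ taking $X$ to $\infty$, which preserves $\mathcal F$ and the number of crossed edges and keeps $r_{j_0}/s_{j_0}$ at a finite vertex) the path $p_i/q_i$ eventually runs monotonically through the integer neighbours of $\infty$, and every Farey edge from $\infty$ to an integer strictly between $r_{j_0}/s_{j_0}$ and $p_i/q_i$ separates the two endpoints of the arc, so the number of crossed edges again grows without bound. Either way Lemma~\ref{N} yields the required divergence.

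The statement for a vertical symmetry $u_{i,j}=u_{i,j+n}$ follows from the row--column symmetry of the set-up, i.e.\ by interchanging the two paths and repeating the argument. I expect the only mildly delicate part to be the bookkeeping in the divergence claim --- making precise the ``nested chain of regions shrinking to $X$'' in the irrational case and the one-sided approach to $X$ in the rational case --- together with the standing facts (implicit already in Proposition~\ref{mn<0}) that a non-backtracking path on $\mathcal F$ converges to a well-defined point of $\overline\R$ and, when that limit is rational, is eventually a monotone run of its neighbours; I would import these from the discussion around Proposition~\ref{mn<0} rather than reprove them.
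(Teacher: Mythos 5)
Your proposal is correct and takes essentially the same approach as the paper, which proves this proposition only by the remark that it is ``similar to Proposition~\ref{mn<0}'': you carry out exactly that adaptation, fixing one of the two paths from Theorem~\ref{thm-sh}(b), choosing $r_{j_0}/s_{j_0}\neq X$, and using Lemma~\ref{N} to force the entries $u_{i,j_0}$ to grow, contradicting periodicity. One small inaccuracy --- a Farey path converging to $\infty$ need not eventually consist of integer neighbours of $\infty$ (e.g. $1,\,3/2,\,2,\,5/2,\,3,\dots$) --- but this is harmless, since your crossing count only uses that $p_i/q_i\to\infty$, so the edges from $\infty$ to the integers strictly between the two endpoints are eventually crossed in arbitrarily large numbers.
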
  

\begin{cor}
\label{no 2}  
There is no positive integral $SL_2$-tiling with two linearly independent translational symmetries.

\end{cor}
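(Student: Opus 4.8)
The plan is to exploit the fact that the set of translational symmetries of a fixed $SL_2$-tiling $\{u_{i,j}\}$ forms a subgroup $\Lambda\subseteq\Z^2$: if both $u_{i,j}=u_{i+m_1,j+n_1}$ and $u_{i,j}=u_{i+m_2,j+n_2}$ hold for all $i,j$, then so do $u_{i,j}=u_{i+m_1+m_2,j+n_1+n_2}$ and $u_{i,j}=u_{i-m_1,j-n_1}$. Having two linearly independent translational symmetries means precisely that $\Lambda$ has rank $2$. I would argue by contradiction, assuming that $\Lambda$ contains two linearly independent vectors $v_1=(m_1,n_1)$ and $v_2=(m_2,n_2)$.

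First I would use Propositions~\ref{mn<0} and~\ref{m0} to constrain $\Lambda$: every nonzero element $(m,n)\in\Lambda$ must satisfy $mn>0$. Indeed, Proposition~\ref{m0} forbids elements of the form $(m,0)$ or $(0,n)$ with $m,n\neq 0$, and Proposition~\ref{mn<0} forbids $mn<0$. In particular $v_1$ and $v_2$ have both coordinates nonzero and of equal sign; since $(m,n)\in\Lambda$ implies $(-m,-n)\in\Lambda$, after replacing $v_i$ by $-v_i$ if needed I may assume $m_1,n_1,m_2,n_2>0$.

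Now I would produce the contradiction directly. The vector $w:=m_1 v_2-m_2 v_1$ lies in $\Lambda$, being an integer combination of $v_1,v_2$, and a computation gives $w=(0,\;m_1 n_2-m_2 n_1)$. Since $v_1$ and $v_2$ are linearly independent, $m_1 n_2-m_2 n_1\neq 0$, so $w$ is a nonzero element of $\Lambda$ of the form $(0,n)$ with $n\neq 0$, contradicting Proposition~\ref{m0}. Hence $\Lambda$ cannot have rank $2$, which is exactly the assertion.

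There is essentially no serious obstacle in this argument; the only points requiring care are the bookkeeping — checking that $\Lambda$ is genuinely a subgroup and that the sign normalization of $v_1,v_2$ is legitimate — and verifying that the cases excluded by Propositions~\ref{mn<0} and~\ref{m0} are precisely those with $mn\le 0$, so that every nonzero element of $\Lambda$ indeed lies in the open first or third quadrant before one clears a coordinate. (Together with the positive results of Sections~\ref{sec n}--\ref{sec mn}, this shows the only symmetries that occur are the single $(m,n)$-periodicities with $mn>0$ corresponding to annuli $A_{m,n}$.)
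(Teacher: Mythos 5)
Your argument is correct and is essentially the paper's own proof: the paper also observes that two non-collinear symmetries force a translation along a coordinate direction (it produces $u_{i,j}=u_{i+m,j}$, you produce $u_{i,j}=u_{i,j+n}$ via the explicit combination $m_1v_2-m_2v_1$), which contradicts Proposition~\ref{m0}. Your preliminary normalization using Proposition~\ref{mn<0} is harmless but not needed, since the integer-combination step alone yields the contradiction.
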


\begin{proof}
An $SL_2$-tiling with two non-collinear translational symmetries would also contain a translation  $u_{i,j}=u_{i+m,j}$ for some $m>0$, which is impossible in view of~\ref{m0}. 

\end{proof}

Combining Propositions~\ref{mn<0} and~\ref{m0}, we obtain the following corollary.

\begin{cor}
\label{mn<=0}  
There is no positive integral $(m,n)$-periodic $SL_2$-tiling with $mn\le 0$.
\end{cor}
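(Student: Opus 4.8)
The plan is to derive Corollary~\ref{mn<=0} directly from the two preceding results, Propositions~\ref{mn<0} and~\ref{m0}, since together they already cover every case with $mn\le 0$. The key observation is that the inequality $mn\le 0$ splits into two mutually exclusive regimes: either $mn<0$, or $mn=0$, i.e. at least one of $m,n$ vanishes.

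First I would dispose of the case $mn<0$: this is exactly the statement of Proposition~\ref{mn<0}, so there is nothing further to do. Next, suppose $mn=0$. If both $m=0$ and $n=0$, the "translational symmetry" $u_{i,j}=u_{i,j}$ is trivial and imposes no condition, so either this degenerate case is excluded by convention (a genuine periodicity requires $(m,n)\ne(0,0)$) or it is vacuously consistent with the statement; I would simply note that a nontrivial $(m,n)$-periodicity with $mn=0$ forces exactly one of $m,n$ to be zero and the other nonzero. In that situation — say $n=0$ and $m\ne 0$ — the tiling satisfies $u_{i,j}=u_{i+m,j}$ for all $i,j$, and replacing $m$ by $-m$ if necessary we may take $m>0$; this is precisely the hypothesis ruled out by Proposition~\ref{m0}. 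The symmetric case $m=0$, $n\ne 0$ is handled identically by the other half of Proposition~\ref{m0}.

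Since every case $mn\le 0$ (with a nontrivial period) has now been excluded, no positive integral $(m,n)$-periodic $SL_2$-tiling with $mn\le 0$ exists, which is the claim.

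The proof is essentially a bookkeeping combination of earlier results, so there is no real obstacle; the only point requiring any care is the handling of the boundary case $mn=0$, namely observing that a nontrivial periodicity vector with $mn=0$ must have exactly one zero coordinate, reducing it cleanly to Proposition~\ref{m0}. If one wished to be maximally careful one could also remark that Corollary~\ref{no 2} is not needed here — it is a further consequence in the opposite direction (ruling out two independent symmetries) rather than an ingredient.
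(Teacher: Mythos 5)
Your proof is correct and follows exactly the paper's route: the corollary is stated there as an immediate combination of Propositions~\ref{mn<0} (case $mn<0$) and~\ref{m0} (case $mn=0$). Your extra remark on the degenerate case $(m,n)=(0,0)$ is a reasonable bit of care the paper leaves implicit.
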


Now, combining results of Theorems~\ref{thm-n},~\ref{thm-mn} and Corollaries~\ref{no 2},~\ref{mn<=0}, we obtain the theorem.

\begin{theorem}
\label{thm-main}
There is a one-to-one correspondence between positive integral   $(m,n)$-periodic $SL_2$-tilings  and positive integral friezes on annulus $A_{m,n}$.
\end{theorem}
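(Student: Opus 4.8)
The plan is to assemble Theorem~\ref{thm-main} purely from the pieces already in place, so the proof is essentially a bookkeeping argument. First I would observe that the correspondence between positive integral $(m,n)$-periodic $SL_2$-tilings (with $mn>0$) and positive integral friezes on $A_{m,n}$ is exactly what Theorems~\ref{thm-n} and~\ref{thm-mn} give, once one recalls from the start of Section~\ref{sec-sl2-transl} that rotating a BiQF by $45^\circ$ turns it into an $SL_2$-tiling and that an $(m,n)$-periodic BiQF becomes an $SL_2$-tiling with $u_{i,j}=u_{i+m,j+n}$. So for the regime $mn>0$ there is nothing new to prove: I would simply cite those two theorems and note that the rotation is a bijection intertwining the two periodicity conditions.

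Next I would need to argue that no other translational symmetries can occur, so that the statement of Theorem~\ref{thm-main} is not vacuously missing cases. This is where Corollaries~\ref{no 2} and~\ref{mn<=0} come in: by Corollary~\ref{mn<=0} a positive integral $SL_2$-tiling cannot be $(m,n)$-periodic with $mn\le 0$, and by Corollary~\ref{no 2} it cannot carry two linearly independent translational symmetries. Hence any translational symmetry a positive integral $SL_2$-tiling possesses must be of the form $u_{i,j}=u_{i+m,j+n}$ with $mn>0$ (and up to sign we may take $m,n>0$), and moreover these symmetries all lie on a single line through the origin in $\Z^2$, so there is a well-defined primitive period $(m,n)$. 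This is the place where I would be most careful: one has to make sure the bijection being claimed is between tilings-with-a-chosen-period and friezes, and to spell out that a tiling with period $(m,n)$ is in particular a tiling with period $(km,kn)$ for every $k$, so the phrasing ``$(m,n)$-periodic'' is understood as ``admits $(m,n)$ as a translational symmetry'' rather than ``has $(m,n)$ as its minimal period.'' The subtlety here — and the only genuine obstacle — is purely expository: ensuring the two sides of the bijection are described with matching conventions so that composing the rotation with the BiQF–frieze correspondence of Theorems~\ref{thm-n} and~\ref{thm-mn} really does land on the nose.

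Concretely, the proof would run: let $\Phi$ denote the $45^\circ$ rotation sending BiQFs to $SL_2$-tilings; it is a bijection between positive integral BiQFs and positive integral $SL_2$-tilings, and it carries an $(m,n)$-periodic BiQF to an $SL_2$-tiling satisfying $u_{i,j}=u_{i+m,j+n}$ and conversely (this was explained at the start of Section~\ref{sec-sl2-transl}). Composing $\Phi^{-1}$ with the bijection of Theorem~\ref{thm-mn} (which specialises to Theorem~\ref{thm-n} when $m=n$) yields a bijection between positive integral $SL_2$-tilings admitting $(m,n)$ as a translational symmetry, for $mn>0$, and positive integral friezes on $A_{m,n}$. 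Finally, Corollaries~\ref{no 2} and~\ref{mn<=0} show these are the only translational symmetries possible, so the correspondence is exhaustive. That completes the proof; the heavy lifting — the direct construction in Section~\ref{sec mn}, Short's theorem, and Lemma~\ref{N} — has all been done upstream, and this last theorem is just the act of stating the combined conclusion.
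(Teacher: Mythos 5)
Your proposal matches the paper's proof: the paper obtains Theorem~\ref{thm-main} by exactly the same combination of Theorems~\ref{thm-n} and~\ref{thm-mn} (via the $45^\circ$ rotation identifying BiQFs with $SL_2$-tilings) together with Corollaries~\ref{no 2} and~\ref{mn<=0} ruling out all other translational symmetries. Your extra care about ``admits $(m,n)$ as a period'' versus ``minimal period'' is a reasonable expository addition but not a departure from the paper's argument.
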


In view of unitarity of all friezes from annuli~\cite{GS}, Theorem~\ref{thm-main} can be reformulated as follows. 

\begin{cor}
\label{cor-bijection}  
There is a one-to-one correspondence between positive integral  $(m,n)$-periodic $SL_2$-tilings  and triangulations of the annulus $A_{m,n}$.

\end{cor}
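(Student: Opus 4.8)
The plan is simply to chain together two bijections that are already at our disposal. By Theorem~\ref{thm-main}, positive integral $(m,n)$-periodic $SL_2$-tilings are in one-to-one correspondence with positive integral friezes on the annulus $A_{m,n}$; so it remains only to biject the latter with triangulations of $A_{m,n}$.

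For this second step I would invoke the theorem of Gunawan and Schiffler~\cite{GS} that every positive integral frieze on an annulus is unitary. Concretely, the map from friezes to triangulations sends a positive integral frieze $F$ on $A_{m,n}$ to its unitary triangulation $T$ (the one with $F(\gamma)=1$ for every $\gamma\in T$), which exists by~\cite{GS} and is unique, as recalled in Section~\ref{fr-on-surf}. The inverse map takes a triangulation $T$ of $A_{m,n}$, labels every arc of $T$ by $1$, and extends the labelling to all arcs by repeated use of the Ptolemy relation; the outcome is independent of the chosen sequence of flips by Remark~\ref{rem-cl}, is positive by~\cite[Theorem 3.1]{P} applied to the positive initial labelling on $T$, and is integral because all initial values are integers, hence all values obtained via Ptolemy relations are integers as well. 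These two maps are visibly mutually inverse, which gives the desired bijection between positive integral friezes on $A_{m,n}$ and triangulations of $A_{m,n}$.

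Composing this bijection with the one of Theorem~\ref{thm-main} yields the statement. The only point worth checking is that the composite respects the marked-point data, i.e.\ that an $(m,n)$-periodic $SL_2$-tiling corresponds to a triangulation of an annulus carrying exactly $m$ marked points on one boundary component and $n$ on the other; but this is precisely how $A_{m,n}$ enters Theorem~\ref{thm-main}, so nothing further is required. In that sense there is no genuine obstacle at this stage: all the substantive work has already been carried out in Theorem~\ref{thm-main} (and thus in Theorems~\ref{thm-n} and~\ref{thm-mn} together with Corollaries~\ref{no 2} and~\ref{mn<=0}) and in the cited unitarity result of~\cite{GS}.
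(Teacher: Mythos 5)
Your proposal is correct and follows the same route as the paper: Corollary~\ref{cor-bijection} is obtained by combining Theorem~\ref{thm-main} with the unitarity of friezes on annuli from~\cite{GS}, which gives the bijection between positive integral friezes on $A_{m,n}$ and triangulations of $A_{m,n}$. Your extra detail on the frieze--triangulation correspondence (unitary triangulation one way, Ptolemy extension of the all-$1$ labelling the other way) matches the argument the paper already uses for the $n$-periodic case and adds nothing contrary.
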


\begin{remark}
  \label{rem-hyp}
  The correspondence obtained in Corollary~\ref{cor-bijection} can be also understood as follows.

 Consider a triangulation $T$ on the annulus $A_{m,n}$, and consider $T$ as the unitary triangulation of a frieze on $A_{m,n}$. Understanding the triangles of $T$ as ideal hyperbolic triangles with mutually tangent horocycles as in~\cite{FeTu}, we can lift the annulus to the hyperbolic plane so that its boundary will be lifted to two paths on the Farey graph. Notice that the paths do not intersect, and when applying the periodicity relation one path will be followed in clockwise direction and the other in a counterclockwise direction. By~\cite{Sh}, these two paths define a postive integral $SL_2$-tiling. This is precisely the $SL_2$-tiling from the bijective correspondence in Corollary~\ref{cor-bijection}. Moreover, there is a hyperbolic isometry of $\H^2$ taking both paths to themselves simultaneously (this isometry can be obtained by lifting to $\H^2$  the generator of the fundamental group of the annulus).     

\end{remark}

\begin{remark}
  \label{J}
  After the present paper was written, it was noticed to the authors by Peter Jorgensen that Corollary~\ref{cor-bijection} (and thus, in view of~\cite{GS}, Theorem~\ref{thm-main}) can be also deduced from a combination of results of~\cite{HJ} and~\cite{BHJ} as follows.

 Let $u_{i,j}$ be a positive integral $SL_2$-tiling. According to~\cite[Lemma 12.4]{BHJ}, if $u_{i,j}$ does not contain $1$, then it contains a unique minimal entry. Thus, if  $u_{i,j}$ has a translational symmetry, then there exists $(i_0,j_0)$ such that $u_{i_0,j_0}=1$. Moreover, by~\cite[Proposition 6.2]{HJ}, $u_{i_0+m,j_0+n}\ne 1$ for any $m,n$ with $mn<0$, and by ~\cite[Proposition 6.1]{HJ} the $i_0$-th row and $j_0$-th column may contain finitely many $1$'s only, which implies that in any $(m,n)$-periodic $SL_2$-tiling one has $mn>0$. Furthermore, if  $u_{i,j}$ is $(m,n)$-periodic, then  $u_{i_0+km,j_0+kn}=1$ for all $k\in\Z$, and then the main theorem of~\cite{HJ} states that such $SL_2$-tilings are in bijection with periodic triangulations of infinite strip, i.e. triangulations of annuli.

  \end{remark}

\section{Properties of $SL_2$-tilings with translational symmetries }
\label{sec-properties}

Theorem~\ref{thm-main} allows us to establish several properties of positive integral $SL_2$-tilings with translational symmetries (or, equivalently, periodic BiQFs), some of which are similar to the properties of frieze patterns described by Coxeter~\cite{C}.

\subsection{Relation to infinite frieze patterns}

An {\em infinite frieze pattern} is a table of numbers $v_{k,l}$ with $k,l\in\Z, l\ge k$, arranged as in Fig.~\ref{fig-inf} satisfying the following conditions. It has a row of zeroes (i.e. $v_{k,k}=0$), a row of $1$’s (i.e. $v_{k,k+1}=1$), and every small diamond satisfies the diamond rule: $v_{k,l}v_{k+1,l+1}-v_{k,l+1}v_{k+1,l}=1$. As for classical frieze patterns, the first non-trivial row $\{v_{k,k+2}\}$ is called the {\em quiddity sequence}, which completely defines the pattern.

\begin{figure}[!h]
  \epsfig{file=./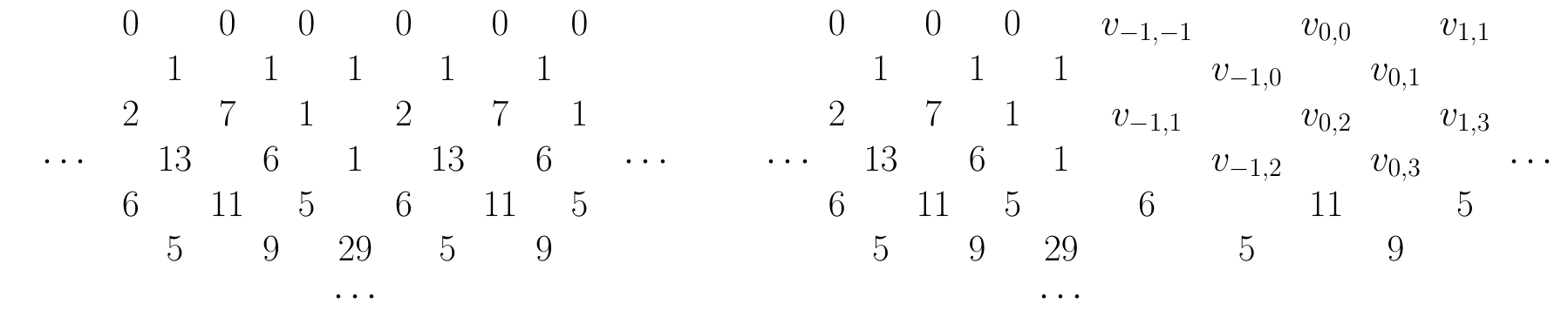,width=0.98\linewidth}  
\caption{Infinite frieze pattern: example (left) and indexing of its entries (right).}
\label{fig-inf}

\end{figure}

An infinite frieze pattern is \emph{$n$-periodic} if $v_{k,l}=v_{k+n,l+n}$ for every $k,l$. 
It follows from~\cite{BPT} that entries of every positive integral $n$-periodic  frieze pattern can be interpreted as follows: there exists a frieze $F$ on an annulus $A_{n,n'}$ (for some $n'$) or a punctured disc with $n$ boundary marked points, such that the entries are the values of $F$ on peripheral arcs. The quiddity sequence consists of the values of the frieze on short peripheral arcs (i.e., those bounding triangles with two sides at the boundary). Vice versa, for any frieze on annulus or punctured disc the values on its peripheral arcs form an infinite periodic frieze pattern. 

Given an $SL_2$-tiling with translational symmetry, we can construct the corresponding frieze on an annulus, and then the two infinite periodic frieze patterns corresponding to the two boundary components of this annulus (i.e. one is $m$-periodic and the other is $n$-periodic).
Their quiddity sequences can be found explicitly. Moreover, they can be read off from any column (respectively, row) of the  $SL_2$-tiling as follows.

\begin{prop}
\label{prop-quid} 
  Let $\{u_{i,j}\}$ be a positive integral $(m,n)$-periodic $SL_2$-tiling,  choose any $i_0,j_0\in\Z$. Define sequences
  $$a_i=\frac{u_{i-1,j_0}+u_{i+1,j_0}}{u_{i,j_0}},\qquad b_j=\frac{u_{i_0,j-1}+u_{i_0,j+1}}{u_{i_0,j}}.$$
Then sequences $(a_i),(b_j)$ are quiddity sequences of positive integer infinite periodic frieze patterns with periods $m$ and $n$ respectively. The patterns do not depend on $i_0,j_0$. 
    \end{prop}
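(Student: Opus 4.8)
The plan is to produce a triangulated surface directly from a column of the $SL_2$-tiling and identify its peripheral arcs with the infinite frieze pattern, using the correspondence of Corollary~\ref{cor-bijection}. Fix $j_0$ and look at the column $(u_{i,j_0})_{i\in\Z}$; by $(m,n)$-periodicity this column satisfies $u_{i+m,j_0+n}=u_{i,j_0}$, but as a single column it is genuinely $m$-``quasi-periodic'' in the sense that the ratios $a_i$ are $m$-periodic (since $a_{i+m}$ is computed from $u_{i+m-1,j_0},u_{i+m,j_0},u_{i+m+1,j_0}$, which by periodicity equal $u_{i-1,j_0+n},u_{i,j_0+n},u_{i+1,j_0+n}$ — wait, that shifts the column, so one must instead use the translational symmetry in the form that the whole configuration repeats; the cleanest route is via the annulus). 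Concretely, via Theorem~\ref{thm-mn} (or Corollary~\ref{cor-bijection}) the tiling corresponds to a frieze $F$ on $A_{m,n}$ with a triangulation $T$; the entries $u_{i,j_0}$ for $i=i_0,\dots,i_0+m$ are the $F$-values on a fan of bridging arcs $\hat P_i\hat Q_{j_0}$ emanating from a single lift $\hat Q_{j_0}$. Two consecutive such arcs together with the boundary arc $\hat P_i\hat P_{i+1}$ bound a triangle, and these $m$ triangles around $\hat Q_{j_0}$ descend to the annulus to a fan triangulation based at $Q_{j_0}$.

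The key computation is then a single Ptolemy relation. For each $i$, the arcs $\hat P_{i-1}\hat Q_{j_0}$, $\hat P_i\hat Q_{j_0}$, $\hat P_{i+1}\hat Q_{j_0}$ and the peripheral arc $\hat P_{i-1}\hat P_{i+1}$ form a quadrilateral $\hat P_{i-1}\hat P_i\hat P_{i+1}\hat Q_{j_0}$ (with diagonal $\hat P_i\hat Q_{j_0}$ and the opposite ``diagonal'' $\hat P_{i-1}\hat P_{i+1}$), so
\[
F(\hat P_i\hat Q_{j_0})\cdot F(\hat P_{i-1}\hat P_{i+1}) = F(\hat P_{i-1}\hat Q_{j_0})\cdot F(\hat P_{i+1}\hat P_i) + F(\hat P_{i-1}\hat P_i)\cdot F(\hat P_i\hat Q_{j_0})\cdot\text{(boundary)},
\]
and since $F(\hat P_{i-1}\hat P_i)=F(\hat P_i\hat P_{i+1})=1$ this reads $u_{i,j_0}\cdot F(\hat P_{i-1}\hat P_{i+1}) = u_{i-1,j_0}+u_{i+1,j_0}$, i.e. $F(\hat P_{i-1}\hat P_{i+1})=a_i$. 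Thus $a_i$ is exactly the $F$-value on the short peripheral arc cutting off the ear at $P_i$ in the fan triangulation based at $Q_{j_0}$. By the discussion following Fig.~\ref{fig-inf} (the result of~\cite{BPT} recalled in the text), the $F$-values on the peripheral arcs of the $P$-boundary form an $m$-periodic positive integer infinite frieze pattern whose quiddity sequence is precisely the sequence of values on these short peripheral arcs — that is, $(a_i)$. This gives the claim for $(a_i)$; the argument for $(b_j)$ is identical with the roles of the two boundary components exchanged.

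Finally, independence of $i_0,j_0$: the infinite frieze pattern attached to the $P$-boundary depends only on the frieze $F$ on $A_{m,n}$ (it is literally the restriction of $F$ to peripheral arcs of that component), not on which fan triangulation one uses to compute it; changing $j_0$ to $j_0'$ just replaces the fan based at $Q_{j_0}$ by the fan based at $Q_{j_0'}$, which by the Ptolemy/flip invariance in Remark~\ref{rem-cl} computes the same $F$, hence the same values $a_i=F(P_{i-1}P_{i+1})$. Changing $i_0$ only cyclically relabels which period of the sequence one reads off. The main obstacle — really the only nontrivial point — is getting the quadrilateral and its Ptolemy relation set up correctly on the universal cover so that all the boundary labels are genuinely $1$ (this needs $m,n\ge 1$, which holds, and that $\hat P_{i-1}\hat P_{i+1}$ is an honest arc, i.e. that $P_{i-1},P_i,P_{i+1}$ are distinct — true once $m\ge 3$, and the cases $m=1,2$ can be checked by hand or absorbed into the $n'$/punctured-disc wording of~\cite{BPT}); after that it is just Ptolemy plus the cited description of infinite friezes from annuli.
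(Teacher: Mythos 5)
Your proposal is correct and takes essentially the same route as the paper's own proof: identify $a_i$, via a Ptolemy relation in the quadrilateral $\hat P_{i-1}\hat P_i\hat P_{i+1}\hat Q_{j_0}$, with the value of $\hat F$ on the short peripheral arc $\hat P_{i-1}\hat P_{i+1}$, then invoke the recalled result of~\cite{BPT} that the values of a frieze on peripheral arcs form a periodic infinite frieze pattern with quiddity given by the short peripheral arcs, whence periodicity and independence of $i_0,j_0$. (Only a cosmetic slip: in your displayed Ptolemy relation the second term should read $F(\hat P_{i-1}\hat P_i)\cdot F(\hat P_{i+1}\hat Q_{j_0})$, which is what you actually use when concluding $u_{i,j_0}\cdot F(\hat P_{i-1}\hat P_{i+1})=u_{i-1,j_0}+u_{i+1,j_0}$.)
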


    \begin{proof}
      Consider the frieze $F$ on $A_{m,n}$ constructed from the tiling  $\{u_{i,j}\}$. It is easy to see (Fig.~\ref{fig-periph}(b)) that $a_i$ is equal to the value of $\hat F$ on the short peripheral arc connecting $\hat P_{i-1}$ and $\hat P_{i+1}$ , where $\hat F$, as before, is the lift of $F$ to  the universal cover. Therefore, $a_i=a_{i+m}$, all $a_i$ do not depend on $j_0$, and the sequence $(a_i)$ defines an infinite periodic frieze pattern. Sequence $(b_j)$ can be treated similarly.
      
      \end{proof}

The following immediate corollary of Prop.~\ref{prop-quid} is a direct counterpart of~\cite[(8.2)]{C}

      \begin{cor}
        Let $\{u_{i,j}\}$ be a positive integral  $(m,n)$-periodic $SL_2$-tiling. Then $u_{i,j}$ divides $u_{i-1,j}+u_{i+1,j}$ and $u_{i,j-1}+u_{i,j+1}$ for any $i,j\in\Z$.

        Moreover, the ratio
        $\frac{u_{i,j-1}+u_{i,j+1}}{u_{i,j}}$ does not depend on $i$, and $\frac{u_{i-1,j}+u_{i+1,j}}{u_{i,j}}$ does not depend on $j$.
        \end{cor}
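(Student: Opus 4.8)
The plan is to read the corollary off Proposition~\ref{prop-quid} with essentially no extra work. Fix $i,j\in\Z$. Applying Proposition~\ref{prop-quid} with the choice $j_0=j$, the number
$$a_i=\frac{u_{i-1,j}+u_{i+1,j}}{u_{i,j}}$$
is an entry --- indeed a quiddity entry --- of a positive integral infinite frieze pattern. By the definition of such a pattern recalled above, all of its entries are positive integers; hence $a_i\in\Z$, which is exactly the assertion that $u_{i,j}$ divides $u_{i-1,j}+u_{i+1,j}$. Running the same argument with the sequence $(b_j)$ of Proposition~\ref{prop-quid} and the choice $i_0=i$ shows that $u_{i,j}$ divides $u_{i,j-1}+u_{i,j+1}$. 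Since $i$ and $j$ were arbitrary, this yields the divisibility statement at every position.

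For the independence part I would invoke the last sentence of Proposition~\ref{prop-quid}, according to which the infinite frieze pattern carrying the quiddity sequence $(a_i)$ does not depend on the auxiliary index $j_0$. In particular its quiddity entry $a_i$ does not change if we vary $j_0=j$, i.e.\ the ratio $\frac{u_{i-1,j}+u_{i+1,j}}{u_{i,j}}$ is independent of $j$. Symmetrically, the ratio $\frac{u_{i,j-1}+u_{i,j+1}}{u_{i,j}}=b_j$ is independent of $i_0=i$, hence independent of $i$.

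There is no genuine obstacle here: positivity of the tiling guarantees that the denominators $u_{i,j}$ are nonzero, so the ratios are well defined, and the only input beyond Proposition~\ref{prop-quid} is the immediate observation that the quiddity entries of a positive integral infinite frieze pattern are positive integers. In this sense the corollary is just a restatement of Proposition~\ref{prop-quid}, exactly as~\cite[(8.2)]{C} is a restatement of the classical frieze--polygon correspondence.
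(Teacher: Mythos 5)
Your proposal is correct and matches the paper's intent exactly: the paper offers no separate argument, presenting the corollary as an immediate consequence of Proposition~\ref{prop-quid}, which is precisely how you derive both the divisibility (the ratios are quiddity entries of positive integral infinite frieze patterns) and the independence claims (the patterns, hence their quiddity entries, do not depend on $i_0,j_0$).
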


        Furthermore, any element of the two infinite periodic frieze patterns can be found explicitly from the tiling $\{u_{i,j}\}$.

        \begin{prop}
         \label{prop-ets} 
          Let $v_{k,l}$ be an $m$-periodic infinite frieze pattern corresponding to a positive integral  $(m,n)$-periodic $SL_2$-tiling   $\{u_{i,j}\}$. Then
          $$v_{k,l}=-\begin{vmatrix} u_{k,j} & u_{k,j+1} \\ u_{l,j} & u_{l,j+1} \end{vmatrix}$$
          for any $j\in\Z$.
          \end{prop}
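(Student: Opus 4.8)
The plan is to express both sides of the claimed identity in terms of the pair of paths on the Farey graph provided by Theorem~\ref{thm-sh}(b), and to recognise the right-hand side as a known formula for the entries of an infinite frieze pattern read off from a boundary component of the annulus. First I would invoke Theorem~\ref{thm-sh}(b) to write $u_{i,j}=\det(p_i/q_i,\,r_j/s_j)$ for the pair of non-intersecting paths $(p_i/q_i)$, $(r_j/s_j)$ associated to $\{u_{i,j}\}$. For fixed $j$, the two columns $u_{\bullet,j}$ and $u_{\bullet,j+1}$ are obtained by pairing the path $(p_i/q_i)$ against the two consecutive Farey-path vertices $r_j/s_j$ and $r_{j+1}/s_{j+1}$, which satisfy $\det(r_j/s_j,\,r_{j+1}/s_{j+1})=1$ (an edge of $\mathcal F$). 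The key computation is then the $2\times 2$ determinant
\[
\begin{vmatrix} u_{k,j} & u_{k,j+1} \\ u_{l,j} & u_{l,j+1} \end{vmatrix}
=\begin{vmatrix}
\det(p_k/q_k, r_j/s_j) & \det(p_k/q_k, r_{j+1}/s_{j+1})\\
\det(p_l/q_l, r_j/s_j) & \det(p_l/q_l, r_{j+1}/s_{j+1})
\end{vmatrix},
\]
and I expect this to collapse, via a Plücker/Ptolemy-type identity (or a direct expansion using $p_kq_l-q_kp_l$ and $r_js_{j+1}-s_jr_{j+1}=1$), to $-\det(p_k/q_k,\,p_l/q_l)$. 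Indeed, writing $P_k=(p_k,q_k)$ etc.\ as vectors in $\mathbb Z^2$ and using the bilinear form $\omega(u,v)=u_1v_2-u_2v_1$, the $2\times 2$ determinant above is $\omega(P_k,R_j)\omega(P_l,R_{j+1})-\omega(P_k,R_{j+1})\omega(P_l,R_j)$, which by the identity $\omega(a,b)\omega(c,d)-\omega(a,d)\omega(c,b)=\omega(a,c)\omega(b,d)$ in a $2$-dimensional symplectic space equals $\omega(P_k,P_l)\,\omega(R_j,R_{j+1})=\omega(P_k,P_l)$. Hence the right-hand side of the Proposition equals $-\omega(P_k,P_l)=\det(p_k/q_k,\,p_l/q_l)$, which in particular is manifestly independent of $j$.

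It remains to identify $\det(p_k/q_k,\,p_l/q_l)$ (for $l\ge k$) with the entry $v_{k,l}$ of the $m$-periodic infinite frieze pattern attached to the $P$-boundary component of the annulus $A_{m,n}$. For this I would use the hyperbolic picture of Remark~\ref{rem-hyp}: the path $(p_i/q_i)$ is exactly the lift of that boundary component to the Farey graph, with successive vertices joined by edges, so the arcs $\hat P_k\hat P_l$ correspond to the chords $p_k/q_k$–$p_l/q_l$, and the frieze value $\hat F(\hat P_k\hat P_l)$ equals the Farey determinant $|\det(p_k/q_k, p_l/q_l)|$ by the standard correspondence between friezes from triangulated surfaces and Farey-graph determinants (cf.\ the construction underlying Theorem~\ref{thm-sh}, together with~\cite{FeTu}). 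That $\hat F$ restricted to peripheral arcs of this component gives precisely the infinite frieze pattern $v_{k,l}$ is exactly the statement recalled before Proposition~\ref{prop-quid} (following~\cite{BPT}), and the normalisations $v_{k,k}=0$, $v_{k,k+1}=1$ match $\det=0$ on a degenerate chord and $\det=1$ on a boundary edge. The sign is fixed by consistency with Proposition~\ref{prop-quid}: the quiddity entry $v_{k,k+2}=a_k=(u_{k-1,j_0}+u_{k+1,j_0})/u_{k,j_0}$ is positive, and one checks the formula $-\det\big(\begin{smallmatrix}u_{k,j}&u_{k,j+1}\\u_{k+2,j}&u_{k+2,j+1}\end{smallmatrix}\big)$ reproduces it, using the $3\times3$ vanishing (tameness) relation $u_{k+1,j}u_{k-1,j}=u_{k,j}u_{k-2,j}\cdot(\cdots)$ — more cleanly, directly from $\det(p_k/q_k,p_{k+2}/q_{k+2})$ via $p_kq_{k+1}-q_kp_{k+1}=1$ and the three-term recursion $p_{k+2}=a_{k+1}p_{k+1}-p_k$ on the Farey path, giving $a_k$ with a single sign.

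The main obstacle I anticipate is not the algebra — the determinant identity is a two-line Plücker computation — but pinning down the overall sign and the index conventions: the paper's row-numbering runs bottom-to-top (Remark~\ref{warning}), the Short paths have been reoriented (Remark~\ref{rem-paths}), and the infinite-frieze indexing $v_{k,l}$ with $l\ge k$ must be aligned with the chord $p_k/q_k$–$p_l/q_l$ in the correct cyclic order on the boundary of the disc. I would handle this by checking the formula against the two simplest cases, $v_{k,k}=0$ and $v_{k,k+1}=1$, and then against one quiddity entry $v_{k,k+2}$, which simultaneously verifies the sign and that $l\ge k$ is the right range; $j$-independence is already built in from the computation above, and $m$-periodicity of $(v_{k,l})$ follows from $(m,n)$-periodicity of $\{u_{i,j}\}$ since $p_{k+m}/q_{k+m}$ is the next lift of the same marked point.
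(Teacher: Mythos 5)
Your argument is essentially correct, but it takes a genuinely different route from the paper. The paper's proof is a one-liner on the surface side: $v_{k,l}$ is by construction the value of the lifted frieze $\hat F$ on the peripheral arc $\hat P_k\hat P_l$, and the Ptolemy relation in the quadrilateral $\hat P_k\hat P_l\hat Q_{j+1}\hat Q_j$ (whose side $\hat Q_j\hat Q_{j+1}$ has value $1$) gives $u_{k,j+1}u_{l,j}=v_{k,l}+u_{k,j}u_{l,j+1}$, which is the stated formula with the sign and the $j$-independence automatic. You instead go through Short's parametrization (Theorem~\ref{thm-sh}) and collapse the $2\times 2$ determinant of determinants by the Pl\"ucker identity (which is in substance Proposition~\ref{det}); your algebra there is right, and the approach buys something the paper's proof does not make explicit: $j$-independence is manifest, and you obtain the pleasant byproduct that the infinite frieze pattern is (up to sign) the array of Farey determinants $\det(p_k/q_k,p_l/q_l)$ along one of Short's paths. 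Two caveats. First, your identification of this determinant with $v_{k,l}$ leans, as written, on a correspondence between peripheral-arc frieze values and Farey determinants (via Remark~\ref{rem-hyp} and~\cite{FeTu}) that the paper never states -- Remark~\ref{rem-hyp} only matches the bridging-arc values; however, your fallback does close this inside the paper's toolkit: the array $w_{k,l}:=u_{k,j+1}u_{l,j}-u_{k,j}u_{l,j+1}$ is $j$-independent by your computation, satisfies the diamond rule (the same Pl\"ucker identity, now applied along the $P$-path, or simply the identical Pl\"ucker relation for a two-column matrix), has $w_{k,k}=0$, $w_{k,k+1}=1$, and has quiddity $(a_i)$ via the three-term relation $u_{i-1,j}+u_{i+1,j}=a_iu_{i,j}$ of Proposition~\ref{prop-quid}; since the quiddity determines the pattern, $w=v$. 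Second, the sign bookkeeping you flag is genuinely delicate here: with the paper's definition $\det(p/q,r/s)=ps-qr$ your line $-\omega(P_k,P_l)=\det(p_k/q_k,\,p_l/q_l)$ is a slip, and because of the bottom-to-top row numbering (Remark~\ref{warning}) and the reoriented path (Remark~\ref{rem-paths}) the step determinants of the two paths do not enter with the same effective sign; your plan of pinning signs down against $v_{k,k+1}=1$ and one quiddity entry is the right (and necessary) way to settle this.
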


          \begin{proof}
The proof follows from applying the Ptolemy relation to the quadrilateral with vertices $\hat P_k$, $\hat P_l$, $\hat Q_{j+1}$ and  $\hat Q_j$, see Fig.~\ref{fig-periph}(a). 

            \end{proof}

      \begin{example}
       For the BiQF shown in Fig.~\ref{fig-mn} (or for the corresponding $SL_2$-tiling), the two infinite frieze patterns have quiddities $(a_1,a_2,a_3)=(7,1,2)$ and $(b_1,b_2)=(2,3)$.  The first of these two infinite frieze patterns (the one with period $3$) is shown in Fig.~\ref{fig-inf}. 

      \end{example}

            Two periodic infinite frieze patterns constructed from different boundary components of a frieze on $A_{m,n}$ share a quantitative characteristic called {\em growth coefficient} (or simply {\em growth}). Namely, it was shown in~\cite{BFPT} that for an $n$-periodic infinite frieze pattern $v_{k,l}$ the value of $v_{i,i+n+1}-v_{i+1,i+n}$ does not depend on $i$.

            \begin{remark}
              \label{rm-growth}
              In terms of the corresponding frieze on $A_{m,n}$, the growth is precisely the trace of the hyperbolic isometry from Remark~\ref{rem-hyp}, which explains why the growth is the same for both  periodic infinite frieze patterns. This can be also seen as follows. The entry $v_{i,i+n+1}$ corresponds to a self-intersecting arc on the annulus.  Denote by $\Theta$ the evaluation of the element of the cluster algebra corresponding to the simple closed curve on the annulus (see Remark~\ref{rem-cl}). Smoothing the intersection (see~\cite{MSW}) leads to the equality $v_{i,i+n+1}=v_{i+1,i+n}+\Theta\cdot 1$, so the growth is equal to $\Theta$. On the other hand, it is well known that $\Theta$ is equal to the trace of the corresponding hyperbolic isometry. 
\end{remark}

              The growth can also be read off from the tiling $\{u_{i,j}\}$ as follows.

            \begin{prop}
              Let $\{u_{i,j}\}$ be a positive integral $(m,n)$-periodic $SL_2$-tiling. Then the value
              $$\frac{u_{i+m,j}+u_{i,j+n}}{u_{i,j}}$$
              is integer and does not depend on $i,j$. Moreover, it coincides with the growth of the two corresponding  periodic infinite frieze patterns. 
            \end{prop}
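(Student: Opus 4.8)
The plan is to pass to the Farey-graph description of Theorem~\ref{thm-sh} and to encode the translational symmetry as a single hyperbolic isometry. By Theorem~\ref{thm-sh}(b) the tiling is $u_{i,j}=\det(V_i,W_j)$, where $V_i=\binom{p_i}{q_i}$ and $W_j=\binom{r_j}{s_j}$ are the vertices of the two paths viewed as column vectors and $\det(\,\cdot\,,\,\cdot\,)$ denotes the determinant of the $2\times2$ matrix with the indicated columns, as in Proposition~\ref{det}. The first step is to produce $h\in SL_2(\Z)$ with $hV_i=V_{i+m}$ and $hW_j=W_{j+n}$ for all $i,j$. This $h$ is precisely the hyperbolic isometry of Remark~\ref{rem-hyp} (the lift to $\H^2$ of the generator of $\pi_1(A_{m,n})$, which shifts the index on the first boundary path by $m$ and on the second by $n$); alternatively it can be built by hand: $V_0,V_1$ form a $\Z$-basis of $\Z^2$ because $\det(V_0,V_1)=\pm1$, so one defines $h$ by $hV_0=V_m$, $hV_1=V_{m+1}$, notes that $\det h=\det(V_m,V_{m+1})/\det(V_0,V_1)=1$ hence $h\in SL_2(\Z)$, and then the periodicity $u_{a,b}=u_{a+m,b+n}$ forces $hW_j=W_{j+n}$ for all $j$ (since $hW_j$ and $W_{j+n}$ both solve $\det(V_m,x)=u_{0,j}$, $\det(V_{m+1},x)=u_{1,j}$, a system with unique solution); the symmetric argument, now using $hW_0=W_n$ and $hW_1=W_{n+1}$, then gives $hV_i=V_{i+m}$ for all $i$.

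The second step is the elementary identity
$$\det(hV,W)+\det(V,hW)=\tr(h)\,\det(V,W),$$
valid for any $2\times2$ matrix $h$ and any $V,W\in\R^2$, which is a one-line expansion. Applying it with $V=V_i$, $W=W_j$ and using $u_{i+m,j}=\det(hV_i,W_j)$, $u_{i,j+n}=\det(V_i,hW_j)$ and $u_{i,j}=\det(V_i,W_j)$, we obtain
$$u_{i+m,j}+u_{i,j+n}=\tr(h)\cdot u_{i,j}.$$
Therefore $\frac{u_{i+m,j}+u_{i,j+n}}{u_{i,j}}=\tr(h)$, which is an integer and visibly independent of $i$ and $j$; as it is also a ratio of positive quantities, this simultaneously fixes which of the two $SL_2$-lifts of the $PSL_2(\Z)$-isometry is intended.

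It remains to identify $\tr(h)$ with the growth. Since $h$ is the isometry of Remark~\ref{rem-hyp}, Remark~\ref{rm-growth} already states that $\tr(h)$ equals the common growth coefficient of the two periodic infinite frieze patterns (the $m$-periodic and the $n$-periodic one) attached to the boundary components of $A_{m,n}$, so the proof is complete. If a direct verification is preferred, expand the growth $v_{i,i+m+1}-v_{i+1,i+m}$ of the $m$-periodic pattern as a difference of two $2\times2$ determinants in the entries $u_{k,l}$ taken at a fixed pair of consecutive columns via Proposition~\ref{prop-ets}, eliminate the entries with large row index using the relation $u_{a+m,b}+u_{a,b+n}=\tr(h)\,u_{a,b}$ just proved, and re-read what remains through the row-column-transposed form of Proposition~\ref{prop-ets}; this yields $v_{i,i+m+1}-v_{i+1,i+m}=\tr(h)$, and running the same computation with rows and columns interchanged covers the $n$-periodic pattern and re-proves that the two growths coincide.

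The only genuinely delicate point, and the main obstacle, is the bookkeeping of signs and orientations: one must reconcile the clockwise versus counter-clockwise conventions of Theorem~\ref{thm-sh}(b) and Remark~\ref{rem-paths} with the direction of the deck transformation, select the correct $SL_2$-lift of the isometry, and keep track of the normalization $\det(V_i,V_{i+1})\,\det(W_j,W_{j+1})=-1$ (equivalently $u_{i,j}u_{i+1,j+1}-u_{i,j+1}u_{i+1,j}=-1$), which is what positivity of the tiling forces and which the computational variant above needs. Once these conventions are pinned down, everything else is routine linear algebra.
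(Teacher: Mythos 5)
Your main argument is correct, but it takes a genuinely different route from the paper. The paper stays on the surface side: by Remark~\ref{rm-growth} the growth equals the evaluation $\Theta$ of the cluster-algebra element attached to the simple closed curve of the annulus, and one application of the skein relation, smoothing the intersection of that closed curve with the bridging arc carrying $u_{i,j}$, gives $\Theta\cdot u_{i,j}=u_{i+m,j}+u_{i-m,j}$, after which periodicity $u_{i-m,j}=u_{i,j+n}$ finishes the proof in two lines. You instead work entirely inside Short's Farey-path parametrization: you build the deck-transformation matrix $h\in SL_2(\Z)$ from the periodicity (your uniqueness-of-solution argument forcing $hW_j=W_{j+n}$ and $hV_i=V_{i+m}$ is sound, and the trace is conjugation-invariant, so the $SL_2(\Z)$-ambiguity in Theorem~\ref{thm-sh} is harmless), and then the identity $\det(hV,W)+\det(V,hW)=\tr(h)\det(V,W)$ gives integrality and independence of $(i,j)$ by pure linear algebra, with no skein relation needed. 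Both proofs ultimately lean on Remark~\ref{rm-growth} for the link to the growth, just on different halves of it (the paper on ``growth $=\Theta$'', you on ``growth $=$ trace of the isometry''), so your level of rigour there matches the paper's. What your approach buys is an explicit matrix whose trace is the invariant, and a self-contained derivation of the divisibility; what the paper's buys is brevity and a direct geometric meaning of the smoothing. One caveat: your optional ``direct verification'' via Proposition~\ref{prop-ets} does not quite do what you claim. Carrying out that computation (using $u_{a+m,b}=\tr(h)u_{a,b}-u_{a,b+n}$ and the adjacent-minor normalization $u_{i,j}u_{i+1,j+1}-u_{i,j+1}u_{i+1,j}=-1$) yields $G_m+G_n=2\tr(h)$, where $G_m,G_n$ are the two growths, so by itself it neither gives $G_m=\tr(h)$ nor re-proves $G_m=G_n$; you still need the equality of the two growths (from~\cite{BFPT} or Remark~\ref{rm-growth}) to split the sum. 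Since that sketch is an aside and your primary identification goes through Remark~\ref{rm-growth}, this does not affect the validity of the proof, but the sentence claiming the computation ``re-proves that the two growths coincide'' should be corrected or dropped.
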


            \begin{proof}
             Due to Remark~\ref{rm-growth}, the growth of the two periodic infinite frieze patterns is equal to the evaluation $\Theta$ of the element of the cluster algebra corresponding to the simple closed curve $\gamma$ on the annulus. This element can be found by applying skein relation. Namely, $\gamma$ intersects any bridging arc, in particular the arc corresponding to the entry $u_{i,j}$ of the $SL_2$-tiling (for any $i,j\in\Z$). Smoothing the intersection leads to the equality $\Theta \cdot u_{i,j}= u_{i+m,j}+u_{i-m,j}$.  Since $u_{i-m,j}= u_{i,j+n}$, we obtain the required statement.

            \end{proof}

            \subsection{Is it integer?}

            We now want to address the following question. Given a (small) piece of table of positive integers, do they compose a part of an  $SL_2$-tiling with translational symmetry?

            Let $a<c$ and $b<d$ be integers. By a {\em lattice path} between $(a,b)$ and $(c,d)$ we mean a sequence of pairs  $\{(i_k,j_k)\}$ for $k=0,\dots,c-a+d-b$ with $(i_{0},j_{0})=(a,b)$, $(i_{c-a+d-b},j_{c-a+d-b})=(c,d)$, and $(i_{k+1},j_{k+1})$ equal to either $(i_{k},j_{k}+1)$ or $(i_{k}+1,j_{k})$. See Fig.~\ref{fig-path} for an example of a lattice path between $(i,j)$ and $(i+m,j+n)$.

\begin{figure}[!h]
  \epsfig{file=./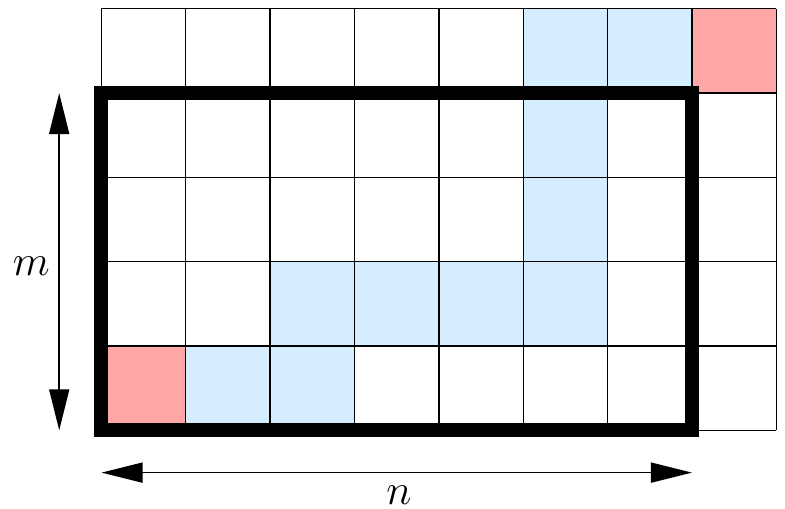,width=0.3\linewidth}  
\caption{A lattice path between $(i,j)$ and $(i+m,j+n)$.}
\label{fig-path}
\end{figure}

            \begin{prop}
            \label{zig-zag}  
              Let $m,n\in\Z_{>0}$, $i,j\in\Z$,  let $\Pi=\{(i_k,j_k)\}$ for $k=0,\dots, m+n$ be a lattice path between $(i,j)$ and $(i+m,j+n)$. Let $\{u_{i_k,j_k}\}$ be positive integers with  $u_{i,j}=u_{i+m,j+n}$, $(i_k,j_k)\in \Pi$.

              \begin{enumerate}
              \item
                If all  $u_{i_k,j_k}=1$ then $\{u_{i_k,j_k}\}$ extends uniquely to a  positive integral $(m,n)$-periodic $SL_2$-tiling  $\{u_{i,j}\}$.
              \item
                Assume that  $\{u_{i_k,j_k}\}$ satisfy the following two conditions:
                \begin{itemize}
                \item[-]
                  If $i_{k+1}-i_{k-1}$ is even, then $\frac{u_{i_{k+1},j_{k+1}}+u_{i_{k-1},j_{k-1}}}{u_{i_k,j_k}}\in\Z$.
                  \item[-] If $i_{k+1}-i_{k-1}$ is odd (i.e equal to $1$) then $\frac{u_{i_{k+1},j_{k+1}}\cdot \,u_{i_{k-1},j_{k-1}}+1}{u_{i_k,j_k}}\in\Z$.
                  \end{itemize}
 Then $\{u_{i_k,j_k}\}$ extends uniquely to a  positive integral $(m,n)$-periodic $SL_2$-tiling  $\{u_{i,j}\}$.

              \end{enumerate}

              \end{prop}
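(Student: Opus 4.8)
The plan is to prove (2); part (1) is the special case $u_{i_k,j_k}\equiv 1$, where both divisibility conditions read $2/1\in\Z$. Using $u_{i,j}=u_{i+m,j+n}$, I first extend the given values along the bi-infinite monotone lattice path $\widehat\Pi=\bigcup_{t\in\Z}\bigl(\Pi+t(m,n)\bigr)$, so that the divisibility conditions hold at every vertex of $\widehat\Pi$. It is straightforward that $\widehat\Pi$ together with these values extends uniquely to a positive \emph{real} $(m,n)$-periodic table with all adjacent $2\times2$ minors equal to $1$: propagating the minor relations outward from $\widehat\Pi$, each new entry is forced, and at a convex corner of a monotone path it equals $(u_au_b+1)/u_{\mathrm{corner}}>0$, where $u_a,u_b$ are the values at the two path-neighbours of the corner and $u_{\mathrm{corner}}$ is the value at the corner (the sign being fixed by the conventions of Section~\ref{sec-sl2-transl}; a separate easy check is that these conventions indeed produce the $+1$ appearing in the statement's ``odd'' condition). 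So the positive real extension exists and is unique, and all the content is in showing its entries are integers.

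The engine is an elementary \emph{corner move}, performed $(m,n)$-periodically on a monotone path: at a convex corner, replace the corner vertex by the opposite vertex of the adjacent unit cell, giving it the value $(u_au_b+1)/u_{\mathrm{corner}}$ dictated by the minor relation. If the path carries positive values satisfying the divisibility conditions, then (a) the new entry is a \emph{positive integer} --- this is precisely the ``odd'' condition at that corner --- and (b) the modified path again satisfies all $m+n$ divisibility conditions. For (b), only the three conditions adjacent to the moved vertex change, and each of them follows from the old conditions at the corner and at its neighbour by a short computation modulo $u_{\mathrm{corner}}$: the key identities are $u_{\mathrm{opp}}\equiv u_a^{-1}$ and (when the move creates a new straight run) $u_a\equiv -u_{a-1}^{\,-1}$ modulo $u_{\mathrm{corner}}$, so that the two contributions to the new relation cancel; here one uses that consecutive entries of a row or column are coprime, which is immediate from the unit minors. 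Using corner moves to sort the up-steps of $\widehat\Pi$ before its right-steps (bubble sort: each move is an adjacent transposition removing one inversion, so finitely many moves suffice) converts $(\widehat\Pi,u)$ into the ``fan'' path $\widehat\Pi_{\mathrm{fan}}$, still carrying positive integers and still satisfying all divisibility conditions.

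For the fan case, write $\lambda_i=\frac{u_{i-1,q}+u_{i+1,q}}{u_{i,q}}$ and $\beta_j=\frac{u_{i,j-1}+u_{i,j+1}}{u_{i,j}}$; these are well-defined by tameness and are $m$- resp.\ $n$-periodic. The $m-1$ even conditions along an ascending run of $\widehat\Pi_{\mathrm{fan}}$ give $\lambda_r\in\Z$ for all $r\not\equiv i\pmod{m}$, the $n-1$ even conditions along a horizontal run give $\beta_s\in\Z$ for all $s\not\equiv j\pmod{n}$, and the two odd conditions (one per corner of a period) give that the two ``pocket'' entries are integers. From these pockets together with adjacent staircase entries, running the column recurrence $u_{p-1,q}=\lambda_p\, u_{p,q}-u_{p+1,q}$ (all required $\lambda_p$ have $p\not\equiv i$, and when $m=1$ a neighbouring horizontal run supplies the needed entries directly) fills columns $j-1,j,j+1$ in rows $i,\dots,i+m$ with integers; three consecutive integral columns then force $\beta_j\in\Z$, hence all $\beta_s\in\Z$, and the row recurrence $u_{p,q-1}=\beta_q\, u_{p,q}-u_{p,q+1}$ fills every column in rows $i,\dots,i+m$; finally periodicity gives $u_{i-1,q}=u_{i+m-1,q+n}\in\Z$ (row $i+m-1$ being now filled), so $\lambda_i\in\Z$, hence all $\lambda_r\in\Z$, and the column recurrence fills every remaining row. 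Thus all entries are positive integers, and --- since each of them is forced by the minor relations --- this is the unique positive integral $(m,n)$-periodic $SL_2$-tiling extending the given data.

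The step I expect to be the main obstacle is (b): that the divisibility conditions survive a corner move. A move can convert an ``odd'' condition into an ``even'' one (a new straight run appears), and the resulting even condition amounts to $\beta_b\in\Z$ for an index $b$ that need not have occurred among the original conditions --- yet it must be deduced from the two adjacent old ``odd'' conditions through the cancelling congruences above. Keeping this modular bookkeeping correct in each of the few local configurations of a corner, and in their mirror images, is the delicate point; everything after the reduction to $\widehat\Pi_{\mathrm{fan}}$ is the routine propagation of linear recurrences described above.
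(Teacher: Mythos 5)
Your proposal is correct in outline, but it takes a genuinely different route from the paper. The paper identifies the lattice path $\Pi$ with a triangulation $T_\Pi$ of the annulus $A_{m,n}$ all of whose arcs are bridging, reads the given numbers as a positive evaluation of the corresponding cluster seed, observes that your two divisibility conditions are exactly integrality of the one-step flips/mutations of that seed, and then quotes \cite[Corollary 2.1]{BFZ} to conclude that \emph{all} cluster variables evaluate to integers; positivity and the passage back to the tiling come from Remark~\ref{rem-cl} and Theorem~\ref{thm-main}, and part (1) is immediate because $T_\Pi$ is then the unitary triangulation. Your ``corner move'' is precisely this flip seen inside the tiling, and your claim (b) --- that the $m+n$ conditions survive a corner move --- is exactly the special case of the cited cluster-algebra input, which you propose to re-prove by hand. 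Reassuringly, that step does go through: writing $x$ for the corner value, $a,b$ for its path-neighbours and $x'$ for the new value, one has $xx'=ab+1$, so modulo the \emph{neighbour's} value $a$ (not modulo the corner value, as your sketch states) the relation $xx'\equiv 1\pmod a$ converts the old condition $p+x\equiv 0\pmod a$ into $px'+1\equiv 0\pmod a$, and the old $px+1\equiv 0\pmod a$ into $p+x'\equiv 0\pmod a$ after multiplying by $x$ and using $\gcd(a,x)\mid xx'-ab=1$; the mirror configurations are identical and the condition at the moved vertex is just integrality of the old value $x$. What your approach buys is a self-contained, elementary argument that never mentions cluster algebras; what it costs is length and several steps that are asserted as straightforward but still need writing out: that a periodic staircase of positive reals extends uniquely to a positive, tame table in which \emph{all} adjacent $2\times 2$ minors equal $1$ (this positivity and tameness are load-bearing for defining $\lambda_i,\beta_j$ in your fan endgame, and are not established by checking convex corners alone), the coprimality argument yielding $\beta_j\in\Z$ from three integral columns, and the degenerate cases $m=1$ or $n=1$. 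None of these looks problematic, but as written your text is a detailed sketch, whereas the paper's proof is complete in a few lines at the price of invoking \cite{BFZ} and the surface/frieze machinery already set up in Sections~\ref{sec-prelim}--\ref{sec mn}.
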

            
              \begin{proof}
                Consider an annulus $A_{m,n}$ with a triangulation $T_\Pi$ constructed as follows: an arc $P_rQ_s$ is contained in $T_\Pi$ if and only if $(r,s)$ is contained in $\Pi$. It is easy to see that  $T_\Pi$ is indeed a triangulation, and all its arcs are bridging. Moreover, the arcs with common index lie in the same fan of $T_\Pi$. The numbers  $u_{i_k,j_k}$ can be considered as the values of cluster variables from a seed of the cluster algebra corresponding to $T_\Pi$. In particular, since all $u_{i_k,j_k}$ are positive, all cluster variables are positive.

                If all  $u_{i_k,j_k}=1$, then $T_\Pi$ is a unitary triangulation, so we get a positive integral frieze on $A_{m,n}$, and the corresponding $SL_2$-tiling (see Thm.~\ref{thm-main}) contains the lattice path $\Pi$ filled in with $1$'s, which proves (1).

                Assume now that not all of  $u_{i_k,j_k}$ are equal to $1$. According to~\cite[Corollary 2.1]{BFZ}, for all cluster variables to be integer it is sufficient to check that one mutation in every direction preserves integrality, which is precisely the assumption of (2). 
                
                \end{proof}

\subsection{Units in $SL_2$-tilings with translational symmetries}

  The following proposition is a counterpart of the~\cite[(23)--(26)]{CC} (which lead to Conway--Coxeter classification of finite frieze patterns).           

                \begin{prop}
\label{prop-ears}
                  Let $\{u_{i,j}\}$ be a positive integral $(m,n)$-periodic $SL_2$-tiling. Then the following dichotomy holds.
                  \begin{itemize}
                  \item[(1)] Either there exists $(i_0,j_0)$ and a lattice path between $(i_0,j_0)$ and  $(i_0+m,j_0+n)$ such that $u_{i_k,j_k}=1$ for every $k=0,\dots,m+n$;
                  \item[(2)] or there exists $i_0$ (or $j_0$) such that the row $u_{i_0,j}$ is a sum of two rows  $u_{i_0-1,j}$ and $u_{i_0+1,j}$ (respectively, the column $u_{i,j_0}$ is a sum of two columns  $u_{i,j_0-1}$ and $u_{i,j_0+1}$). 
                    
                    \end{itemize}

                    Furthermore, in case (2) removing all rows   $u_{i_0+mt,j}$ (resp., all columns   $u_{i,j_0+nt}$) for all $t\in\Z$ leads to a $(m-1,n)$-periodic  $SL_2$-tiling (resp., $(m-1,n)$-periodic).
                  
                  \end{prop}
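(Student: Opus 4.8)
The plan is to pass, via Corollary~\ref{cor-bijection}, from the positive integral $(m,n)$-periodic $SL_2$-tiling $\{u_{i,j}\}$ to the unitary triangulation $T$ of the corresponding positive integral frieze $F$ on $A_{m,n}$ (this $T$ exists and is unique by~\cite{GS} and the discussion in Section~\ref{fr-on-surf}), and to read off the dichotomy from whether $T$ uses a peripheral arc. I write $\hat F$ for the lift of $F$ to the universal cover, so that $u_{i,j}=\hat F(\hat P_i\hat Q_j)$ as in Example~\ref{ex-mn}. The two alternatives of the proposition will match, respectively, ``$T$ has no peripheral arc'' and ``$T$ has a peripheral arc''.

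First I would treat the case that $T$ has no peripheral arc. An easy case analysis shows every triangle of $T$ then has exactly one boundary side and two bridging sides (a triangle with two boundary sides would have a peripheral third side, while a bridging arc joins a $P$ to a $Q$). Hence, following the bridging arcs of $T$ along the universal cover, the triangle on either side of $\hat P_a\hat Q_b$ has a single boundary side, $\hat P_a\hat P_{a+1}$ or $\hat Q_b\hat Q_{b+1}$, so its other bridging side is $\hat P_{a+1}\hat Q_b$ or $\hat P_a\hat Q_{b+1}$; the index pairs thus form a bi-infinite lattice path, and $T=T_\Pi$ for a lattice path $\Pi$ between some $(i_0,j_0)$ and $(i_0+m,j_0+n)$ in the sense of Proposition~\ref{zig-zag}. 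Since $T$ is unitary, $F(\gamma)=1$ for every $\gamma\in T_\Pi$, i.e.\ $u_{i_k,j_k}=1$ for all $k$; this is alternative~(1).

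Next I would treat the case that $T$ has a peripheral arc, say on the component carrying $P_1,\dots,P_m$ (a peripheral arc on the other component is handled identically and produces the column version). Such an arc cuts off a disc together with a boundary sub-path; choosing one whose disc contains the fewest marked points and applying the standard ``ear'' argument for triangulated polygons, this minimal arc bounds a triangle of $T$ with two boundary sides, so it has the form $P_{i_0-1}P_{i_0+1}$ with $P_{i_0-1}P_{i_0}P_{i_0+1}\in T$. By Proposition~\ref{prop-quid} and its proof, the quiddity value $a_{i_0}=\frac{u_{i_0-1,j}+u_{i_0+1,j}}{u_{i_0,j}}$ equals $\hat F$ evaluated on the short peripheral arc $\hat P_{i_0-1}\hat P_{i_0+1}$; since this arc lies in $T$, $a_{i_0}=1$, so $u_{i_0,j}=u_{i_0-1,j}+u_{i_0+1,j}$ for all $j$ (and, by periodicity, for every row index congruent to $i_0$ modulo $m$). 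This is alternative~(2).

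For the ``furthermore'' part, in case~(2) I would delete the ear triangle $P_{i_0-1}P_{i_0}P_{i_0+1}$ and all its translates from $T$: the arc $P_{i_0-1}P_{i_0+1}$ becomes a boundary arc, $P_{i_0}$ disappears, and $T$ turns into a triangulation $T'$ of $A_{m-1,n}$. Let $F'$ be the restriction of $F$ to the arcs of $A_{m-1,n}$; using $F(P_{i_0-1}P_{i_0+1})=1$ one checks that $F'$ is again a positive integral frieze on $A_{m-1,n}$ with unitary triangulation $T'$, so by Theorem~\ref{thm-main} it corresponds to a positive integral $(m-1,n)$-periodic $SL_2$-tiling $\{u'_{i,j}\}$. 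Since $u'_{i,j}=\hat F(\hat P_{\sigma(i)}\hat Q_j)$ for the order-preserving reindexing $\sigma$ of the surviving $P$-indices, $\{u'_{i,j}\}$ is exactly $\{u_{i,j}\}$ with the rows $u_{i_0+mt,\,j}$, $t\in\Z$, removed; the same argument on the other component gives the $(m,n-1)$-periodic statement. I expect the main obstacle to be the bookkeeping of this last step — that removing the ear yields a genuine triangulation of $A_{m-1,n}$, that $F'$ remains positive and integral so that Theorem~\ref{thm-main} applies, and that the resulting tiling is literally the row-deleted one — while the ear-existence argument and the reconstruction $T=T_\Pi$ are routine polygon/annulus combinatorics.
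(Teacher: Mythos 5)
Your proposal is correct and follows essentially the same route as the paper: pass to the unitary triangulation of the associated frieze on $A_{m,n}$, obtain the lattice path of $1$'s when all arcs are bridging, reduce a peripheral arc to a short one $P_{i_0-1}P_{i_0+1}$ and apply the Ptolemy/quiddity relation (your use of Proposition~\ref{prop-quid} is just the paper's Ptolemy computation in disguise) to get the row-sum identity, and remove the resulting ear triangle to land on $A_{m-1,n}$ or $A_{m,n-1}$. The extra bookkeeping you flag (ear existence, restriction of the frieze, reindexing of rows) is exactly the part the paper leaves implicit, and your treatment of it is sound.
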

                  \begin{proof}
                    Consider the frieze on $A_{m,n}$ corresponding to the tiling $\{u_{i,j}\}$, and let $T$ be its unitary triangulation. If all arcs in $T$ are bridging, then the entries $\{u_{i,j}\}$ corresponding to arcs of $T$ form a lattice path between some $(i_0,j_0)$ and  $(i_0+m,j_0+n)$, so we get $(1)$.

                    Assume now that $T$ contains a peripheral arc $\gamma$.  Assume that the lift of $\gamma$ to the universal cover connects $\hat P_i$ with $\hat P_{i'}$. It is easy to see that $T$ then contains a peripheral arc whose lift connects $\hat P_{i_0-1}$ with $\hat P_{i_0+1}$ for some $i_0$. Given any $j\in \Z$, applying Ptolemy relation to the quadrilateral with vertices $\hat P_{i_0-1},\hat P_{i_0},\hat P_{i_0+1},\hat Q_{j}$ we obtain $u_{i_0,j}=u_{i_0-1,j}+u_{i_0+1,j}$ (cf. Fig.~\ref{fig-periph}(b)). Similarly, if the lift of $\gamma$ connects $\hat Q_j$ with $\hat Q_{j'}$, then there is $j_0$ such that  $u_{i,j_0}=u_{i,j_0-1}+u_{i,j_0+1}$ for any $i\in\Z$. Therefore, we get $(2)$, so we proved the dichotomy.

                    In particular, in case $(2)$, $T$ contains a triangle with two sides being boundary arcs. Removing this triangle (and thus all arcs incident to the removed marked point) leads to a frieze on $A_{m-1,n}$ or $A_{m,n-1}$, so we get  $SL_2$-tiling with required translational symmetry.

                    \end{proof}

                    \begin{remark}
The proof of Prop.~\ref{prop-ears} implies that case (2) corresponds to friezes with unitary triangulation containing a triangle with two sides on the boundary. Removing such triangles one by one, we obtain a frieze on an annulus with bridging unitary triangulation. Equivalently, for any positive integral $SL_2$-tiling with translational symmetry one can remove certain number of rows and columns to obtain a tiling with a lattice path of $1$'s.   
                      
                      \end{remark}

\end{document}